\documentclass[12pt]{amsart}

\usepackage{amssymb,amsmath,amsthm}

\textwidth=15cm
\textheight=22cm
\topmargin=0.5cm
\oddsidemargin=0.5cm
\evensidemargin=0.5cm
\pagestyle{plain}

\def\RR{{\mathbb R}}
\def\ZZ{{\mathbb Z}}

\def\QQ{{\mathbb Q}}
\newtheorem{Theorem}{Theorem}[section]
\newtheorem{Lemma}[Theorem]{Lemma}
\newtheorem{Corollary}[Theorem]{Corollary}
\newtheorem{Proposition}[Theorem]{Proposition}
\theoremstyle{definition}
\newtheorem{Remark}[Theorem]{Remark}
\newtheorem{Example}[Theorem]{Example}

\def\PP{\mathcal P}
\def\QQ{\mathcal Q}
\def\eb{{\bf e}}

\begin{document}

\title{Toric rings and ideals of stable set polytopes}
\author{Kazunori Matsuda, Hidefumi Ohsugi and Kazuki Shibata}

\address{Kazunori Matsuda,
Department of Pure and Applied Mathematics, 
Graduate School of Information Science and Technology,
Osaka University, Suita, Osaka  565-0871, Japan}
\email{kaz-matsuda@ist.osaka-u.ac.jp}

\address{Hidefumi Ohsugi,
Department of Mathematical Sciences, 
School of Science and Technology, 
Kwansei Gakuin University, Sanda, Hyogo, 669-1337, Japan}
\email{ohsugi@kwansei.ac.jp}

\address{Kazuki Shibata,
Department of Mathematics,
College of Science,
Rikkyo University,
Toshima-ku, Tokyo 171-8501, Japan} 
\email{k-shibata@rikkyo.ac.jp }

\subjclass[2010]{}
\keywords{toric ideals, Gr\"obner bases, graphs, stable set polytopes}

\begin{abstract}
In this paper, 
we discuss the normality of the toric rings of stable set polytopes,
and the set of generators and Gr\"obner bases of toric ideals of stable set polytopes
by using the results on that of edge polytopes of finite nonsimple graphs.
In particular, for a graph of stability number two,
we give a graph theoretical characterization of the set of generators of the toric ideal of the stable set polytope, 
and a criterion to check whether the toric ring of the stable set polytope is normal or not.
One of the application of the results is an infinite family of stable set polytopes whose toric ideal is
generated by quadratic binomials and has no quadratic Gr\"obner bases.
\end{abstract}

\maketitle

\section*{Introduction}

Let $\PP \subset \RR^n$ be an integral convex polytope, i.e., a convex polytope 
each of whose vertices has integer coordinates, and
let $\PP \cap \ZZ^n = \{{\bf a}_1, \ldots, {\bf a}_m\}$.
Let $K[X,X^{-1}, t]=K[x_1,x_1^{-1},\ldots,x_n,x_n^{-1}, t]$
be the Laurent polynomial ring in $n+1$ variables over a field $K$.
Given an integer vector ${\bf a} =(a_1,\ldots,a_n) \in \ZZ^n$,
we set $X^{\bf a} t = x_1^{a_1} \cdots x_n^{a_n}t  \in K[X,X^{-1}, t]$.
Then, the {\em toric ring} of $\PP$ is the subalgebra $K[\PP]$ of $K[X,X^{-1},t]$
generated by $\{ X^{{\bf a}_1} t, \ldots,  X^{{\bf a}_m} t \}$ over $K$.
%those Laurent monomials $X^{\bf a} t = x_1^{a_1} \cdots x_n^{a_n}t $
%such that ${\bf a} =(a_1,\ldots,a_n) \in \PP \cap \ZZ^n$.
We regard $K[\PP]$ as
a homogeneous algebra
by setting each $\deg X^{{\bf a}_i} t = 1$.
The {\em toric ideal} $I_\PP$ of $\PP$ is the kernel of a surjective homomorphism 
$\pi: K[y_1,\ldots, y_m] \rightarrow K[\PP]$
defined by $\pi(y_i) =  X^{{\bf a}_i} t$ for $1 \le i \le m$.
It is known that $I_\PP$ is generated by homogeneous binomials and 
reduced Gr\"obner bases of $I_\PP$ consist of homogeneous binomials.
See, e.g., \cite{Stu}.
The following properties on 
an integral convex polytope $\PP$ have been investigated by
many papers on commutative algebra and combinatorics:
\begin{enumerate}
\item[(i)]
$\PP$ is unimodular, i.e., any triangulation of $\PP$ is unimodular\\
(The initial ideal of $I_\PP$ is generated by squarefree monomials
with respect to any monomial order);
\item[(ii)]
$\PP$ is compressed, i.e., 
any ``pulling" triangulation is unimodular\\
(The initial ideal of $I_\PP$ is generated by squarefree monomials
with respect to any reverse lexicographic order);
\item[(iii)]
$\PP$ has a regular unimodular triangulation\\
(There exists a monomial order such that 
the initial ideal of $I_\PP$ is generated by squarefree monomials);
\item[(iv)]
$\PP$ has a unimodular triangulation;
\item[(v)]
$\PP$ has a unimodular covering;
\item[(vi)]
$\PP$ is normal, i.e., $K[\PP]$ is normal.
\end{enumerate}
The hierarchy 
(i) $\Rightarrow $
(ii) $\Rightarrow $
(iii) $\Rightarrow $
(iv) $\Rightarrow $
(v) $\Rightarrow $
(vi)
is known.
See, e.g., \cite{Stu} for details.
However, the converse of each of the five implications is false.
On the other hand, the following properties on $I_\PP$ are studied by
many authors:
\begin{enumerate}
\item[(a)]
$I_\PP$ has a quadratic Gr\"obner bases;
\item[(b)]
$K[\PP]$ is Koszul algebra;
\item[(c)]
$I_\PP$ is generated by quadratic binomials.
\end{enumerate}
The hierarchy 
(a) $\Rightarrow $
(b) $\Rightarrow $
(c) 
is known.
However, the converse of each of the two implications is false.
See, e.g., \cite[Examples 2.1 and 2.2]{OHquad} and \cite{counterexamples}.

The purpose of this paper is to study 
such properties of toric rings and ideals of 
stable set polytopes of simple graphs.
Let $G$ be a finite simple graph on the vertex set $[n]=\{1,2,\ldots,n\}$
and let $E(G)$ denote the set of edges of $G$.
Given a subset $W \subset [n]$, we associate the $(0,1)$-vector
$\rho(W) = \sum_{j \in W} {\eb}_j \in {\RR}^n.$
Here, ${\bf e}_i$ is the $i$th unit coordinate vector of ${\RR}^n$. 
In particular, $\rho(\emptyset)$ is the origin of $\RR^n$.
A subset $W$ is called {\em stable} if $\{i,j\} \notin E(G)$ for all $i,j \in W$
with $i \neq j$.
Note that the empty set and each single-element subset of $[n]$ are stable.
Let $S(G)$ denote the set of all stable sets of $G$.
The {\em stable set polytope} ({\em independent set polytope}) 
of a simple graph $G$ is the $(0,1)$-polytope
$\QQ_G \subset \RR^n$ which is the convex full of $\{\rho(W) : W \in S(G)\}$.
Stable set polytopes are very important in many areas, e.g., 
optimization theory.
On the other hand, several results are known for
the toric ring $K[\QQ_G]$ and ideals $I_{\QQ_G}$ of the stable set polytope 
$\QQ_G$ of a simple graph $G$.
\begin{enumerate}
\item
For a graph $G$, the stable set polytope
$\QQ_G$ is compressed if and only if $G$ is perfect
(\cite{GPT, OHcompressed, Sul}).
\item
For a perfect graph $G$, the toric ring
$K[\QQ_G]$ is Gorenstein if and only if 
all maximal cliques of $G$ have the same cardinality
(\cite{OHspecial}).

\item
For a graph $G$, $K[\QQ_G]$ is strongly Koszul if and only if 
$G$ is trivially perfect (\cite[Theorem 5.1]{KM}).

\item
Let $G(P)$ be a comparability graph of a poset $P$.
Then, $\QQ_{G(P)}$ is called a {\em chain polytope} of $P$.
It is known that the toric ideals of a chain polytope has a squarefree quadratic initial ideal
(see \cite[Corollary 3.1]{HL}).
For example, if a graph $G$ is bipartite, then there exists a 
poset $P$ such that $G=G(P)$.
\item
Suppose that a graph $G$ on the vertex set $[n]$ is
an almost bipartite graph, i.e., there exists a vertex $v$
such that the induced subgraph of $G$ on the vertex set $[n] \setminus \{v\}$ is bipartite.
Then, $I_{\QQ_G}$ has a squarefree quadratic initial ideal
(\cite[Theorem 8.1]{EnNo}).
For example, any cycle is almost bipartite.

\item
Let $G$ be the complement of even cycle of length $2k$.
Then, the maximum degree of a minimal set of 
binomial generators of $I_{\QQ_G}$ is equal to $k$
(\cite[Theorem 7.4]{EnNo}).
\end{enumerate}
In the present paper, we study  the normality of the toric rings of stable set polytopes,
and the set of generators and Gr\"obner bases of toric ideals of stable set polytopes
by using the results on that of edge polytopes of finite (nonsimple) graphs.
Here, the {\em edge polytope} $\PP_G \subset \RR^n$ 
of a graph $G$ allowing loops and having no multiple edges
is the convex full of 
%$\{\rho(e) : e \in E(G)\}$.
%
$$
\{\eb_i + \eb_j : \{i,j\} \mbox{ is an edge of } G \}\ 
\cup \ 
\{ 2 \eb_i : \mbox{ there is a loop at } i \mbox{ in } G \}
.$$

This paper is organized as follows. 
In Section 1, fundamental properties
 of $K[\QQ_G]$ and $I_{\QQ_G}$ are studied.
In particular, the relationship between the stable set polytopes
and the edge polytopes are given (Lemma \ref{keylemma}).
In addition, it is shown that $\QQ_G$ is unimodular
if and only if the complement of $G$ is bipartite (Proposition \ref{unimodular}).
We also point out that,
by the results in \cite{EnNo}, it is easy to see that
$I_{\QQ_G}$ has a squarefree quadratic initial ideal if
 $G$ is either a chordal graph or a ring graph (Proposition \ref{chordalring}).
In Section 2, we discuss the normality of the stable set polytopes.
We prove that, for a simple graph $G$ of stability number two,
$\QQ_G$ is normal if and only if the complement of $G$ satisfies the
``odd cycle condition" (Theorem \ref{a2normal}).
Using this criterion, we construct an infinite family of normal stable set polytopes
without regular unimodular triangulations (Theorem \ref{aninfinite}).
For general simple graphs, some necessary conditions for $\QQ_G$ to be normal  
are also given.
In Section 3, we study the set of generators and Gr\"obner bases of toric ideals of stable set polytopes.
It is shown that, for a simple graph $G$ of stability number two,
the set of binomial generators of $I_{\QQ_G}$ is described by 
the even closed walk of a graph (Theorem \ref{generators}).
If $\overline{G}$ is bipartite and if $I_{\QQ_G}$ is generated by quadratic binomials,
then $I_{\QQ_G}$ has a quadratic Gr\"obner basis (Corollary \ref{KB}). 
Finally, using the results on normality, generators, and Gr\"obner bases,
we give an infinite family of nonnormal stable set polytopes
whose toric ideal is generated by quadratic binomials and has no quadratic Gr\"obner bases
(Theorem \ref{aninfinite}).

\section{Fundamental properties of the stable set polytopes}

In this section, we give some fundamental properties of $K[\QQ_G]$
and $I_{\QQ_G}$.
In particular, a relation between 
the stable set polytopes and the edge polytopes is discussed.
The {\em stability number} $\alpha(G)$ of a graph $G$
is the cardinality of the largest stable set.

\begin{Example}
\label{completegraph}
If a simple graph $G$ satisfies $\alpha(G) =1$, i.e., $G$ is a complete graph,
then $I_{\QQ_G} =\{ 0 \} $, $K[\QQ_G] = K[x_1 t, \ldots, x_nt ,t] \simeq K[y_1,\ldots,y_{n+1}]$,   and
$\QQ_G$ is a simplex.
\end{Example}

\begin{Example}
Suppose that a simple graph $G$ is not connected.
Let $G_1, \ldots , G_s$ be connected components of $G$.
Then, it is easy to see that $K[\QQ_G]$ is isomorphic to the Segre product of 
$K[\QQ_{G_1}], \ldots, K[\QQ_{G_s}]$.
\end{Example}

Thus, it is enough to study stable set polytopes of connected simple graphs $G$ such that $\alpha(G) \geq 2$.
The notion of toric fiber products \cite{tfp} is a generalization of the Segre product.
It is known \cite{EnNo} that we can apply the theory of toric fiber products
to the toric rings of stable set polytopes.
For $i = 1,2$, let $G_i$ be a simple graph on the vertex set $V_i$
and the edge set $E_i$.
If $V_1 \cap V_2$ is a clique of both $G_1$ and $G_2$, then 
we construct a new graph $G_1 \sharp G_2$
on the vertex set $V_1 \cup V_2$ and the edge set $E_1 \cup E_2$
which is called the {\em clique sum} of $G_1$ and $G_2$
along $V_1 \cap V_2$.

\begin{Proposition}
\label{tfiber}
Let $G_1 \sharp G_2$ be the clique sum of simple graphs $G_1$ and $G_2$.
Then, $I_{\QQ_{G_1 \sharp G_2}}$ is a toric fiber product of 
$I_{\QQ_{G_1}}$ and $I_{\QQ_{G_2}}$.
We can construct a set of binomial generators (or a Gr\"obner basis) of 
$I_{\QQ_{G_1 \sharp G_2}}$ from that of 
$I_{\QQ_{G_i}}$'s and some quadratic binomials.
Moreover, $K[\QQ_{G_1 \sharp G_2}]$ is normal if and only if 
both $K[\QQ_{G_1}]$ and $K[\QQ_{G_2}]$ are normal.
\end{Proposition}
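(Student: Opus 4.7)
The plan is to realize $K[\QQ_{G_1 \sharp G_2}]$ explicitly as a toric fiber product in the sense of \cite{tfp} and then quote the general theory for the three conclusions.

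Set $W := V_1 \cap V_2$. Since $W$ is a clique in both $G_i$ and the edge set of $G_1 \sharp G_2$ is $E_1 \cup E_2$, every stable set of $G_1 \sharp G_2$ meets $W$ in at most one vertex, and there is a natural bijection
$$S(G_1 \sharp G_2) \;\longleftrightarrow\; \bigl\{(A,B) \in S(G_1) \times S(G_2) : A \cap W = B \cap W \bigr\},$$
given by $C \mapsto (C \cap V_1, C \cap V_2)$ with inverse $(A,B) \mapsto A \cup B$; stability of $A \cup B$ is automatic because every edge of $G_1 \sharp G_2$ lies entirely in $V_1$ or entirely in $V_2$.

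Next I would multigrade the polynomial ring $K[y_A : A \in S(G_i)]$ presenting $K[\QQ_{G_i}]$ by setting $\deg(y_A) := A \cap W$, with values in the $(|W|+1)$-element set $\{\emptyset\} \cup \{\{v\} : v \in W\}$. Any toric relation among the vectors $\rho(A)$ preserves the multiset of values $A \cap W$, so $I_{\QQ_{G_i}}$ is multigraded. The bijection above then identifies $K[\QQ_{G_1 \sharp G_2}]$ with the toric fiber product of $K[\QQ_{G_1}]$ and $K[\QQ_{G_2}]$ over the polynomial ring on the variables $y_{\emptyset}$ and $y_{\{v\}}$ for $v \in W$. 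Geometrically, this is the grading induced by the coordinate projection of $\QQ_{G_i}$ onto the simplex spanned by $\{\rho(\emptyset)\} \cup \{\rho(\{v\}) : v \in W\}$, which is precisely the simplicial/squarefree situation where Sullivant's theory applies cleanly.

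Once this identification is established, the three assertions follow from the main results of \cite{tfp} (together with the toric-fiber-product remarks of \cite{EnNo}): a set of generators — and in fact a Gr\"obner basis with respect to a suitable monomial order — of $I_{\QQ_{G_1 \sharp G_2}}$ is obtained by lifting generators (respectively the Gr\"obner basis) of each $I_{\QQ_{G_i}}$ and adjoining the standard quadratic compatibility binomials $y_A y_{B'} - y_{A'} y_B$ attached to pairs with $A \cap W = A' \cap W$ and $B \cap W = B' \cap W$; and normality is preserved in both directions under toric fiber products of this form, which yields the ``if and only if''. The main obstacle is the verification that our explicit grading meets the technical hypotheses of Sullivant's theorems — namely, that it is induced by a projection onto a coordinate simplex and that every vertex of $\QQ_{G_i}$ is homogeneous of a single ``simplicial'' degree — and this is immediate from the at-most-one-element-of-$W$ observation for stable sets.
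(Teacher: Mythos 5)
Your argument is correct and takes essentially the same route as the paper, whose entire proof is the citation of \cite[Proposition 5.1]{EnNo} together with the observation that $I_{\QQ_{G_1 \cap G_2}} = \{0\}$ (the common clique contributes a simplex, which is exactly the affine-independence-of-multidegrees hypothesis you verify by hand). Your explicit bijection on stable sets and the multigrading by $A \cap W$ are precisely the content of that cited proposition, so nothing is missing.
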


\begin{proof}
This is a special case of \cite[Proposition 5.1]{EnNo}.
Note that $I_{\QQ_{G_1 \cap G_2}} = \{0\}$.
\end{proof}

A simple graph $G$ is called {\em chordal} if any induced cycle of $G$
is of length 3.
A {\em ring graph} is a graph whose block which is not a bridge or a vertex can be constructed from
a cycle by successively adding cycles of length $\ge 3$ using the edge sum construction.
Ring graphs are introduced in \cite{ring1, ring2} and the toric ideals of cut polytopes of ring graphs
are studied in \cite{NP}.

\begin{Proposition}
\label{chordalring}
Suppose that a simple graph $G$ is either a chordal graph or a ring graph.
Then, $I_{\QQ_G}$ has a squarefree quadratic initial ideal.
\end{Proposition}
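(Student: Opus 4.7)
The plan is to reduce both cases to a clique-sum decomposition and then invoke Proposition \ref{tfiber} inductively, with the two base cases already handled by results cited in the introduction.

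First I would treat the chordal case. Recall the classical theorem of Dirac: a chordal graph $G$ is either a complete graph or admits a proper decomposition $G = G_1 \sharp G_2$ as a clique sum of two induced chordal subgraphs along a clique (obtained from any minimal vertex separator). I would induct on $|V(G)|$. The base case is $G = K_n$, where by Example \ref{completegraph} the toric ideal $I_{\QQ_G}$ is zero and hence trivially has a squarefree quadratic initial ideal (the empty Gröbner basis). For the inductive step, both $I_{\QQ_{G_1}}$ and $I_{\QQ_{G_2}}$ have squarefree quadratic initial ideals by hypothesis, so Proposition \ref{tfiber} — applied to the toric fiber product description — yields a Gröbner basis of $I_{\QQ_{G_1 \sharp G_2}}$ consisting of the lifts of those bases together with additional quadratic binomials arising from the toric fiber product construction of \cite{tfp}; those extra binomials are squarefree, since they are the standard Segre-type relations.

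Next I would treat the ring case. Any graph is the clique sum of its blocks along cut vertices (a $0$-dimensional clique), so by Proposition \ref{tfiber} it suffices to show that $I_{\QQ_B}$ has a squarefree quadratic initial ideal for each block $B$ of $G$. Since $G$ is a ring graph, each such $B$ is either a single vertex, a bridge (i.e., $K_2$), or is obtained from a cycle by successively attaching cycles of length $\ge 3$ along single edges — that is, iterated clique sums along $K_2$'s of cycles. Single vertices and $K_2$'s give trivial toric ideal. For each cycle $C$, the observation from item (5) of the introduction (cycles are almost bipartite) together with \cite[Theorem~8.1]{EnNo} shows that $I_{\QQ_C}$ has a squarefree quadratic initial ideal. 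A second induction using Proposition \ref{tfiber} then finishes the ring graph case.

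The main obstacle is the verification that the toric fiber product machinery actually preserves the property ``has a squarefree quadratic initial ideal'', as opposed to merely preserving a set of binomial generators or an unspecified Gröbner basis. I would therefore inspect the lifting procedure of \cite{tfp} (as summarized in \cite[Proposition~5.1]{EnNo}) to confirm two things: that a quadratic squarefree lex-like monomial order on the components extends to a monomial order on the clique-sum variables for which the union of the lifted Gröbner bases and the new Segre-type quadratic binomials is again a Gröbner basis, and that each new binomial is itself squarefree. Once this is in place, the two inductions described above give the proposition immediately.
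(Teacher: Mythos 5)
Your proposal is correct and follows essentially the same route as the paper: decompose a chordal graph as a clique sum of complete graphs (Example \ref{completegraph} for the base case) and a ring graph as a clique sum of trees and cycles (handled by the almost-bipartite result \cite[Theorem 8.1]{EnNo}), then propagate the squarefree quadratic initial ideal through Proposition \ref{tfiber}. The ``main obstacle'' you flag --- that the toric fiber product construction genuinely preserves squarefree quadratic Gr\"obner bases --- is exactly what the paper delegates to \cite[Proposition 5.1]{EnNo}, so your extra care there is reasonable but does not change the argument.
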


\begin{proof}
It is known that a graph $G$ is chordal if and only if 
$G$ is a clique sum of complete graphs.
By the statement in Example \ref{completegraph}
and Proposition \ref{tfiber}, $I_{\QQ_G}$ has a squarefree quadratic initial ideal
if $G$ is chordal.

Suppose that $G$ is a ring graph.
Then, $G$ is a clique sum of trees and cycles.
Since trees and cycles are almost bipartite, by \cite[Theorem 8.1]{EnNo},
the toric ideal $I_{\QQ_H}$ 
has a squarefree quadratic initial ideal if $H$ is either a tree or a cycle.
Thus, by Proposition \ref{tfiber}, $I_{\QQ_G}$ has a squarefree quadratic initial ideal
if $G$ is a ring graph.
\end{proof}

A graph $G$ is called {\em perfect} if 
the chromatic number of every induced subgraph of $G$ equals the size of the largest clique of that subgraph.
See, e.g., \cite{Diestel}.
The following is known (\cite{GPT, OHcompressed, Sul}).

\begin{Proposition}
\label{perfectcompressed}
Let $G$ be a simple graph.
Then, $\QQ_G$ is compressed if and only if $G$ is perfect.
In particular, if $G$ is perfect, then $\QQ_G$ is normal.
\end{Proposition}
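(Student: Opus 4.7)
The plan is to treat this as a citation-driven proposition: the equivalence ``$\QQ_G$ compressed $\Leftrightarrow$ $G$ perfect'' is established in \cite{GPT, OHcompressed, Sul}, so my proof would reproduce only enough of the argument to make the statement convincing, and then invoke the hierarchy from the introduction for the final sentence.

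For the ``if'' direction, I would combine Chv\'atal's theorem on perfect graphs with the known compressedness criterion. Chv\'atal's theorem asserts that $G$ is perfect if and only if
\[
\QQ_G=\{x\in\RR^n_{\geq 0}\mid \textstyle\sum_{i\in C}x_i\le 1\text{ for every maximal clique }C\text{ of }G\}.
\]
So when $G$ is perfect, every facet of $\QQ_G$ is supported either by a coordinate hyperplane $x_i=0$ or by a clique inequality $\sum_{i\in C}x_i=1$ with $\{0,1\}$-coefficients. I would then appeal to the Ohsugi--Hibi/Sullivant characterization: a $(0,1)$-polytope is compressed precisely when every facet-defining inequality $\langle a,x\rangle\le b$, normalized to have integer coprime coefficients, satisfies $\max_{v\in\QQ_G\cap\{0,1\}^n}\langle a,v\rangle - \min_{v\in\QQ_G\cap\{0,1\}^n}\langle a,v\rangle=1$. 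This ``width-one'' condition is immediate from the description above: the clique inequalities take values $0$ or $1$ on $\{0,1\}$-points of $\QQ_G$ (since a stable set meets a clique in at most one vertex), and the coordinate facets obviously have width one.

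For the ``only if'' direction, I would argue by contrapositive. If $G$ is not perfect, then by the Strong Perfect Graph Theorem (or, for the direct argument in \cite{GPT, OHcompressed}, by the classical fact that non-perfect graphs contain an odd hole or odd antihole as an induced subgraph), there exists an induced subgraph $H$ of $G$ whose stable set polytope $\QQ_H$ is a face of $\QQ_G$ and carries a non-clique facet of larger width: concretely, for an odd antihole of length $2k+1$, the inequality $\sum_{i\in V(H)}x_i\le k$ is a facet of $\QQ_H$ whose support contains the stable set $V(H)$ with value $k>1$, giving width greater than one and violating compressedness (since compressedness passes to faces).

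The ``in particular'' clause follows immediately from implication (ii)$\Rightarrow$(vi) listed in the introduction, so no further work is needed there. The main obstacle in a fully self-contained proof would be the non-perfect direction, which ultimately rests on deep structural results on perfect graphs; since the paper is content to cite \cite{GPT, OHcompressed, Sul}, I would do the same and keep the proof short.
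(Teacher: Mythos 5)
The paper offers no proof of this proposition at all --- it is stated as a known result with the citations \cite{GPT, OHcompressed, Sul} --- so your citation-based treatment matches the paper exactly, and your supplementary sketch (Chv\'atal's clique-inequality description of $\QQ_G$ for perfect $G$ plus the facet-width-one criterion for compressedness in one direction, odd holes/antiholes in the other, and implication (ii) $\Rightarrow$ (vi) for the final clause) is the standard argument underlying those references. One slip in the sketch: for an odd antihole $H$ on $2k+1$ vertices the rank facet is $\sum_{i\in V(H)}x_i\le 2$, since $\alpha(\overline{C_{2k+1}})=2$; the inequality $\sum_{i}x_i\le k$ is the odd-hole facet of $\QQ_{C_{2k+1}}$, and $V(H)$ itself is of course not a stable set. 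Neither affects the argument: in both cases the facet-defining inequality has lattice width at least $2$ on $\QQ_H$ (the empty stable set gives value $0$, a maximum stable set gives $k\ge 2$ resp.\ $2$), so compressedness fails on a face and hence on $\QQ_G$, as you say.
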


For a graph $G$ on the vertex set $[n]$, let $\overline{G}$ denote the complement of a graph $G$.
An induced cycle of $G$ of length $>3$ is called {\em hole} of $G$.
An induced cycle of $\overline{G}$ of length $>3$ is called {\em antihole} of $G$.
%For a graph $G$ with $\alpha(G) = 2$,
The first statement of Proposition \ref{perfecttwo} is called the {\em strong perfect graph theorem}.

\begin{Proposition}
\label{perfecttwo}
Let $G$ be a simple graph.
Then, $G$ is a perfect graph
if and only if $G$ has no odd holes and no odd antiholes. 
In particular, $G$ is a perfect graph with $\alpha(G)=2$
if and only if $\overline{G}$ is bipartite and not empty.
\end{Proposition}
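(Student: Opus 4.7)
The first statement is the celebrated strong perfect graph theorem of Chudnovsky, Robertson, Seymour, and Thomas, and I would simply quote it rather than attempt a proof. All the substantive work is in the ``In particular'' part, which I would deduce from the first statement together with a short analysis of odd cycles.

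For the forward direction, suppose $G$ is perfect with $\alpha(G)=2$. The condition $\alpha(G)=2$ is equivalent to saying that $\overline{G}$ contains no triangle (otherwise $G$ would contain a stable set of three mutually nonadjacent vertices) and that $\overline{G}$ has at least one edge (so that a stable set of size two exists at all). Applying the strong perfect graph theorem to $G$, one gets that $G$ has no odd antihole, which is exactly the statement that $\overline{G}$ has no induced odd cycle of length $\ge 5$. Combined with being triangle-free, $\overline{G}$ has no odd cycle whatsoever and is therefore bipartite.

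For the converse, suppose $\overline{G}$ is bipartite and not empty. The existence of an edge in $\overline{G}$ gives a stable set of size two in $G$, while bipartiteness of $\overline{G}$ forbids triangles in $\overline{G}$, so $\alpha(G)\le 2$; hence $\alpha(G)=2$. To check that $G$ is perfect, I would verify that neither $G$ nor $\overline{G}$ contains an induced odd cycle of length $\ge 5$, and then invoke the strong perfect graph theorem once more. Odd induced cycles in $\overline{G}$ are ruled out directly by bipartiteness. For odd holes in $G$, I would use $\alpha(C_{2k+1})=k$: if $G$ contained an induced $C_{2k+1}$ with $k\ge 3$, then $\alpha(G)\ge 3$, a contradiction; the remaining case $k=2$ is excluded because $\overline{C_5}=C_5$, so an induced $C_5$ in $G$ would give an induced odd cycle in $\overline{G}$, contradicting bipartiteness.

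The only real obstacle is the strong perfect graph theorem itself, which is notoriously deep; however, the paper is using it as a black box, and everything else reduces to the elementary computation $\alpha(C_{2k+1})=k$ and the self-complementarity of $C_5$.
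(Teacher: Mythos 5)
Your proposal is correct, and it matches what the paper intends: the paper states this proposition without proof, explicitly labelling the first sentence as the strong perfect graph theorem and leaving the ``in particular'' part as a routine consequence, which is exactly the deduction you carry out. Your two directions are sound; the only step you compress is passing from ``$\overline{G}$ has no \emph{induced} odd cycle'' to ``$\overline{G}$ has no odd cycle,'' which is justified because a shortest odd cycle is always induced. One remark worth making: the ``in particular'' equivalence does not actually need the full strength of the strong perfect graph theorem. For the forward direction, the weak perfect graph theorem (Lov\'asz) gives that $\overline{G}$ is perfect, and $\alpha(G)=2$ means $\omega(\overline{G})=2$, so every induced subgraph of $\overline{G}$ is $2$-colorable, i.e.\ $\overline{G}$ is bipartite; for the converse, $\overline{G}$ bipartite implies $\overline{G}$ perfect, hence $G$ perfect again by the weak theorem, and $\alpha(G)=2$ follows as you argue. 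Your route via odd holes, odd antiholes, $\alpha(C_{2k+1})=k$, and the self-complementarity of $C_5$ is equally valid but invokes a much deeper black box than necessary for this half of the statement.
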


For a graph $G$, let $\overline{G}^\star $ be the nonsimple graph on the vertex set $[n+1]$ 
whose edge (and loop) set is 
$
E(\overline{G}) \cup 
\{
\{i,n+1\}\ : \ i \in [n+1]\}.
$
The following lemma will play an important role
when we study the stable set polytope $\QQ_G$
of $G$.

\begin{Lemma}
\label{keylemma}
Let $G$ be a simple graph with $\alpha(G) = 2$.
Then,  we have $K[\QQ_G] \simeq K[\PP_{ \overline{G}^\star   }]$.
Moreover, if $\overline{G}$ is bipartite, then there exists a bipartite graph $H$
such that $K[\QQ_G] \simeq K[\PP_H]$.
\end{Lemma}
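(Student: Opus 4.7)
The plan is to exploit the fact that, when $\alpha(G)=2$, every stable set of $G$ has cardinality at most $2$, so the vertices of $\QQ_G$ are exactly $\rho(\emptyset) = \mathbf{0}$, the unit vectors $\eb_i$, and the sums $\eb_i + \eb_j$ with $\{i,j\}\in E(\overline{G})$. On the other hand, the edges and the loop of $\overline{G}^\star$ produce the vertices $\eb_i+\eb_j$ (for $\{i,j\}\in E(\overline{G})$), $\eb_i+\eb_{n+1}$ (for $i\in[n]$), and $2\eb_{n+1}$ of $\PP_{\overline{G}^\star}$. There is thus a natural bijection of vertex sets, and the heart of the proof is to check that this bijection preserves the integer affine relations that define the toric ideals.

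For the first isomorphism, I would define the integer affine embedding $\phi : \RR^n \hookrightarrow \RR^{n+1}$ by $\phi(\mathbf{v}) = (\mathbf{v},\, 2 - v_1 - \cdots - v_n)$. One immediately verifies $\phi(\mathbf{0}) = 2\eb_{n+1}$, $\phi(\eb_i) = \eb_i+\eb_{n+1}$, and $\phi(\eb_i+\eb_j) = \eb_i+\eb_j$, so $\phi$ realizes the desired vertex bijection. Because the linear part of $\phi$ is injective, an integer combination $\sum c_e \mathbf{v}_e$ with $\sum c_e = 0$ vanishes in $\RR^n$ if and only if $\sum c_e \phi(\mathbf{v}_e)$ vanishes in $\RR^{n+1}$. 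Hence the two toric ideals coincide under the corresponding identification of variables, giving $K[\QQ_G]\simeq K[\PP_{\overline{G}^\star}]$.

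For the second isomorphism, assume $\overline{G}$ is bipartite with parts $V_1\sqcup V_2=[n]$. I would introduce the bipartite graph $H$ on vertex set $[n+2]$ with bipartition $V_1\cup\{n+1\}$ versus $V_2\cup\{n+2\}$, whose edges are $E(\overline{G})$ together with $\{i,n+1\}$ for $i\in V_2$, $\{i,n+2\}$ for $i\in V_1$, and $\{n+1,n+2\}$. Every listed edge crosses this bipartition, so $H$ is indeed bipartite. By the first assertion it suffices to prove $K[\PP_H]\simeq K[\PP_{\overline{G}^\star}]$.

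To obtain this final isomorphism, consider the linear ``collapse'' $\psi : \RR^{n+2}\to\RR^{n+1}$ sending $\eb_i\mapsto\eb_i$ for $i\leq n+1$ and $\eb_{n+2}\mapsto\eb_{n+1}$. A direct check shows $\psi$ restricts to a bijection from the vertex set of $\PP_H$ onto that of $\PP_{\overline{G}^\star}$. The forward implication on relations is immediate from linearity. The main step is the converse: given a relation $\sum d_e\mathbf{w}_e = 0$ with $\sum d_e=0$ among the vertices of $\PP_{\overline{G}^\star}$, its lift $\sum d_e\mathbf{v}_e$ in $\RR^{n+2}$ lies in $\ker\psi = \RR(\eb_{n+2}-\eb_{n+1})$, so I must show that its $\eb_{n+2}$-coefficient $\lambda = \sum_{i\in V_1} d_{i,n+1} + d_\star$ (with $d_\star$ the coefficient of $2\eb_{n+1}$) vanishes. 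Summing the $x_i$-coordinate equations of the relation over $i\in V_1$ and over $i\in V_2$ separately, and using that every edge of $\overline{G}$ has exactly one endpoint in each part, yields $\sum_{i\in V_1} d_{i,n+1} = \sum_{i\in V_2} d_{i,n+1}$; substituting this into the $x_{n+1}$-coordinate equation then produces $\lambda=0$. This short bipartite bookkeeping is the main obstacle; once it is carried out, the two toric ideals coincide under the bijection of variables and the desired isomorphism follows.
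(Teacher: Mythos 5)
Your proof is correct and follows essentially the same route as the paper: your affine embedding $\phi(\mathbf{v})=(\mathbf{v},\,2-v_1-\cdots-v_n)$ is exactly the exponent-vector form of the paper's monomial map $\varphi(x_1^{a_1}\cdots x_n^{a_n}t)=x_1^{a_1}\cdots x_n^{a_n}x_{n+1}^{2-(a_1+\cdots+a_n)}$. For the second assertion the paper merely observes that every odd cycle of $\overline{G}^\star$ passes through the vertex $n+1$ and then cites the vertex-splitting argument of \cite[Proof of Proposition 5.5]{OHcentral}; your explicit construction of $H$ by splitting $n+1$ into $n+1$ and $n+2$, together with the bipartite bookkeeping showing $\lambda=0$, is precisely that cited argument carried out in full, so you have supplied details the paper delegates to a reference rather than taken a different path.
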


\begin{proof}
Let $\varphi : K[\QQ_G] \rightarrow K[x_1,\ldots,x_{n+1}]$
be the injective ring homomorphism defined by
$
\varphi (x_1^{a_1} \cdots x_n^{a_n}t ) =
x_1^{a_1} \cdots x_n^{a_n} x_{n+1}^{2-(a_1+\cdots + a_n)}.
$
Then,
$
\varphi (t ) = x_{n+1}^2
$, 
$
\varphi (x_i t ) =
x_i x_{n+1}
$
for $i = 1,2,\ldots,n$, and
$
\varphi (x_k x_\ell t ) =
x_k x_\ell
$ for each stable set $\{k,\ell\}$ of $G$.
Note that $\{k,\ell\}$ is a stable set of $G$
if and only if $\{k,\ell\}$ is an edge of $\overline{G}$.
Hence, the image of $\varphi$ is $K[\PP_{ \overline{G}^\star   }]$.

Suppose that $\overline{G}$ is bipartite.
Then, $\overline{G}$ has no odd cycles.
Hence, any odd cycle of $\overline{G}^\star $ has the vertex $n+1$.
(Note that $\{n+1,n+1\}$ is an odd cycle of length 1.)
Thus, in particular, any two odd cycles of $\overline{G}^\star $
has a common vertex.
By the argument in \cite[Proof of Proposition 5.5]{OHcentral}, 
there exists a bipartite graph $H$
such that $K[\QQ_G] \simeq K[\PP_H]$.
\end{proof}

The first application of Lemma \ref{keylemma} is as follows:

\begin{Proposition}
\label{unimodular}
Let $G$ be a simple graph.
Then, the following conditions are equivalent:
\begin{enumerate}
\item[(i)]
$\QQ_G$ is unimodular;
%(i.e., all triangulations of $\QQ_G$ are unimodular);
\item[(ii)]
$\overline{G}$ is bipartite.
\end{enumerate}
Moreover, if $\alpha(G) = 2$,
then the conditions
\begin{enumerate}
\item[(iii)]
$\QQ_G$ is compressed;
\item[(iv)]
$G$ is perfect
\end{enumerate}
are also equivalent to conditions {\rm (i)} and {\rm (ii)}.
\end{Proposition}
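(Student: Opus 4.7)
The plan is to establish (i) $\Leftrightarrow$ (ii) in full generality, and then derive the additional equivalences (iii), (iv) under the hypothesis $\alpha(G) = 2$ directly from Propositions \ref{perfectcompressed} and \ref{perfecttwo}. A useful initial remark is that a bipartite graph is triangle-free, so bipartiteness of $\overline{G}$ forces $\alpha(G) \leq 2$; in particular, case (ii) lies entirely within the range of Lemma \ref{keylemma}.

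For the direction (ii) $\Rightarrow$ (i) I would split on $\alpha(G)$. If $\alpha(G) = 1$ then $G$ is complete and $\QQ_G$ is a standard simplex, which is trivially unimodular. If $\alpha(G) = 2$ then Lemma \ref{keylemma} supplies a bipartite graph $H$ with $K[\QQ_G] \cong K[\PP_H]$; since the vertex-edge incidence matrix of a bipartite graph is totally unimodular, $\PP_H$ is unimodular, and this property transfers across the isomorphism of graded toric rings to $\QQ_G$.

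For the direction (i) $\Rightarrow$ (ii) I would argue by contrapositive. Assume $\overline{G}$ has an odd cycle $C = (v_1, v_2, \ldots, v_{2k+1}, v_1)$, so each pair $\{v_i, v_{i+1}\}$ is a stable set of $G$. Listing the remaining vertices of $G$ as $w_1, \ldots, w_{n-2k-1}$, form the $(n+1) \times (n+1)$ integer matrix $L$ whose columns are the lifts $(\rho(W), 1)$ for $W$ running through $\emptyset$, the $2k+1$ stable pairs $\{v_i, v_{i+1}\}$, and the $n-2k-1$ singletons $\{w_j\}$, with rows indexed in the order $v_1, \ldots, v_{2k+1}, w_1, \ldots, w_{n-2k-1}$ followed by the homogenizing row. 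Then $L$ decomposes into blocks,
\[
L = \begin{pmatrix} 0 & B_C & 0 \\ 0 & 0 & I \\ 1 & \mathbf{1}^T & \mathbf{1}^T \end{pmatrix},
\]
where $B_C$ is the $(2k+1) \times (2k+1)$ vertex-edge incidence matrix of $C_{2k+1}$. Expanding along the first column, whose only nonzero entry is in the homogenizing row, yields $|\det L| = |\det B_C|$, and the classical fact that $|\det B_C| = 2$ for odd-cycle incidence matrices gives $|\det L| = 2$. Hence the configuration $\QQ_G \cap \ZZ^n$ is not unimodular, and $\QQ_G$ itself is not unimodular.

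Finally, for the additional equivalences when $\alpha(G) = 2$, Proposition \ref{perfectcompressed} provides (iii) $\Leftrightarrow$ $G$ is perfect, and Proposition \ref{perfecttwo} provides ($G$ perfect) $\Leftrightarrow$ (ii) (with $\overline{G}$ automatically nonempty under $\alpha(G) = 2$), so (ii), (iii), (iv) are mutually equivalent. Combined with (i) $\Leftrightarrow$ (ii) from above, this closes the chain of four equivalences. The main obstacle I foresee lies in setting up the block-matrix determinant computation in the reverse direction: one must carefully arrange the rows and columns to expose the block form, and then invoke the nontrivial fact that odd-cycle incidence matrices have determinant $\pm 2$, which is precisely what distinguishes odd cycles from even ones for this purpose.
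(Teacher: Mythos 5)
Your proof is correct and follows essentially the same route as the paper: the same odd-cycle minor of absolute value $2$ for (i) $\Rightarrow$ (ii) (the paper states the value without your explicit block decomposition), Lemma \ref{keylemma} together with unimodularity of bipartite edge polytopes for (ii) $\Rightarrow$ (i), and Propositions \ref{perfectcompressed} and \ref{perfecttwo} for the case $\alpha(G)=2$. The one detail the paper includes that you should too is the exhibition of a reference maximal minor $B$ (the homogenized origin and unit vectors) with $|\det B|=1$, since the criterion being invoked is that $\QQ_G$ is unimodular if and only if all nonzero maximal minors of the homogenized vertex matrix share the same absolute value, so a single minor equal to $2$ only yields non-unimodularity once a minor equal to $1$ is also on record.
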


\begin{proof}
We may assume that $G$ is not complete (i.e., $\overline{G}$ is not empty
and $\alpha(G) \ne 1$).
Let $A$ be the matrix whose columns are
vertices of $\QQ_G$
and 
let 
$$
\widetilde{A}
=
\left(
\begin{array}{ccc}
 & A & \\
1 & \cdots & 1 
\end{array}
\right)
\mbox{ and } \ 
B=
\left(
\begin{array}{cccc}
 {\bf 0} & {\bf e}_1 & \cdots & {\bf e}_n\\
1 & 1 & \cdots & 1 
\end{array}
\right)
.$$
Then,
% ${\bf 0}, {\bf e}_1, \ldots, {\bf e}_n$ are the vertices of $\QQ_G$,
$B$ is a submatrix of $\widetilde{A}$.
Since $| \det (B) | =1$,
it is known \cite[p.70]{Stu} that
$\QQ_G$ is unimodular if and only if 
the absolute value of 
any nonzero $(n+1)$-minor of the matrix $\widetilde{A}$
is 1.

Suppose that $\overline{G}$ is not bipartite.
Then, $\overline{G}$ has an odd cycle $C=(i_1,\ldots,i_{2\ell+1})$.
The the absolute value of the $(n+1)$-minor of $\widetilde{A}$
that corresponds to
$$
\{ {\bf e}_{i_k} +  {\bf e}_{i_{k+1}} : 1 \leq k  \leq 2\ell \}
\cup
\{  {\bf e}_{i_1} +  {\bf e}_{i_{2\ell+1}}, {\bf 0} \}
\cup
\{
{\bf e}_j  :  j \notin 
\{  i_1,\ldots,i_{2\ell+1} \}
\}
$$
equals to 2.
Hence, $\QQ_G$ is not unimodular.
Thus, we have (i) $\Rightarrow$ (ii).

Suppose that $\overline{G}$ is bipartite.
By Lemma \ref{keylemma}, there exists a bipartite graph $H$ such that
$K[\QQ_G] \simeq  K[\PP_H]$.
It is well-known that the edge polytope of a bipartite graph is unimodular.
Thus, $\PP_H$ is unimodular and hence we have (ii) $\Rightarrow$ (i).

Suppose that $\alpha(G)=2$.
By Proposition \ref{perfectcompressed}, conditions (iii) and (iv) are equivalent.
In addition, by Proposition \ref{perfecttwo}, conditions (ii) and (iv) are equivalent.
\end{proof}

We close this section with the fundamental fact on stable set and edge polytopes.

\begin{Proposition}
\label{cpure}
Let $G'$ be an induced subgraph of a graph $G$.
Then,
\begin{itemize}
\item[(i)]
The edge polytope $\PP_{G'}$ is a face of $\PP_G$;
\item[(ii)]
If $G$ is a simple graph, then 
$\QQ_{G'}$ is a face of $\QQ_G$.
\end{itemize}
\end{Proposition}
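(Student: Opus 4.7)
The plan is to prove both parts simultaneously by exhibiting an explicit supporting hyperplane, since the edge polytope and the stable set polytope are both $(0,1)$-polytopes whose vertices are indexed by combinatorial objects living on the vertex set. Let $V'\subseteq [n]$ be the vertex set of the induced subgraph $G'$, and consider the linear functional
\[
\ell(x_1,\ldots,x_n)\ =\ \sum_{i\in [n]\setminus V'} x_i.
\]
I would show that $\ell$ is nonnegative on every vertex of the ambient polytope, and that the vertices on which $\ell$ vanishes are exactly the vertices of the sub-polytope.

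For part (i), the vertices of $\PP_G$ are $\eb_i+\eb_j$ (for each edge $\{i,j\}$ of $G$, including the case $i=j$ which covers loops). Each coordinate of such a vertex is a nonnegative integer, so $\ell\ge 0$ on $\PP_G$. Moreover $\ell(\eb_i+\eb_j)=0$ iff both $i,j\in V'$, which by the definition of \emph{induced} subgraph is equivalent to $\{i,j\}$ (or the loop at $i$) being an edge of $G'$. Hence the face $\{x\in \PP_G : \ell(x)=0\}$ has vertex set precisely the vertices of $\PP_{G'}$, which identifies it with $\PP_{G'}$.

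For part (ii), the vertices of $\QQ_G$ are $\rho(W)$ for stable sets $W$ of $G$. Again $\ell(\rho(W))=|W\cap([n]\setminus V')|\ge 0$, with equality iff $W\subseteq V'$. Since $G'$ is induced, a subset $W\subseteq V'$ is stable in $G'$ iff it is stable in $G$, so the vertices of the face $\{\ell=0\}$ are exactly $\{\rho(W): W\in S(G')\}$, which is $\QQ_{G'}$.

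I do not expect a substantive obstacle here: the argument is essentially bookkeeping, and the only point that actually uses the word ``induced'' is the observation that stability (resp.\ adjacency) of a subset of $V'$ is decided by $G'$ and $G$ in the same way. This is precisely the place where the hypothesis enters, and failing to be an induced subgraph would destroy the equivalence ``$\ell(v)=0 \Leftrightarrow v$ is a vertex of the sub-polytope,'' so the proof structure would collapse without it.
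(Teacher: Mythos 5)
Your argument is correct, and it is the standard one. The paper itself gives no proof of this proposition --- it is stated as a ``fundamental fact'' and implicitly referred to the literature on combinatorial pure subrings --- so there is nothing to compare against; your supporting-hyperplane argument with $\ell(x)=\sum_{i\in[n]\setminus V'}x_i$ is exactly what one would write down, and you correctly isolate the one place where the hypothesis ``induced'' is used. One small wording caveat for part (i): since the paper's edge polytopes allow loops, a generating point $\eb_i+\eb_j$ need not be a vertex of $\PP_G$ (if $G$ has loops at both $i$ and $j$, then $\eb_i+\eb_j$ is the midpoint of $2\eb_i$ and $2\eb_j$), so you should phrase the identification in terms of the generating points rather than the vertices: since $\ell\ge 0$ on every generator, the face $\{x\in\PP_G:\ell(x)=0\}$ is the convex hull of the generators on which $\ell$ vanishes, and these are precisely the generators of $\PP_{G'}$. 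This does not affect the validity of the argument; for part (ii) all the points $\rho(W)$ are genuine vertices (they are distinct $(0,1)$-points), so your statement is exact as written there.
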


By Proposition~\ref{cpure}, a lot of the properties of $K[\QQ_G]$
(resp.~$K[\PP_G]$)
are inherited to $K[\QQ_{G'}]$ (resp.~$K[\PP_{G'}]$);
for example, the normality of the toric ring, the existence of 
a squarefree initial ideal, the existence of a quadratic Gr\"obner basis,
the existence of the set of quadratic binomial generators of the toric ideal.
See \cite{cps}.

\section{Normality of stable set polytopes}

In this section, we study the normality of stable set polytopes.
The normality of an edge polytope studied in \cite{OHnormal, SVV} will play an important role.
If $C_1$ and $C_2$ are cycles in a graph $G$,
% having no common vertex,
then an edge $\{i, j\}$ of $G$ is called a {\em bridge} of $C_1$ and $C_2$
if $i$ is a vertex of $C_1 \setminus C_2$ and if $j$ is a vertex of $C_2 \setminus C_1$.
We say that a graph $G$ satisfies the {\em odd cycle condition} if,
for arbitrary two induced odd cycles $C_1$ and $C_2$ in $G$,
either $C_1$ and $C_2$ have a common
vertex or there exists a bridge of $C_1$ and $C_2$.
For the sake of simpleness, assume that a graph $H$ has at most one loop.
Then, it is known \cite{OHnormal, SVV} that $\PP_H$ is normal if and only if 
each connected component of $H$ satisfies the odd cycle condition.
By \cite[Corollary 2.3]{OHnormal} and Lemma \ref{keylemma},
we have the following.
(Note that $\overline{G}$ below is not necessarily connected.)

\begin{Theorem}
\label{a2normal}
Let $G$ be a simple graph with $\alpha(G) = 2$.
Then, the following conditions are equivalent.
\begin{enumerate}
\item[{\rm (i)}]
$\QQ_G$ is normal{\rm ;}
\item[{\rm (ii)}]
$\QQ_G$ has a unimodular covering{\rm ;}
\item[{\rm (iii)}]
$\overline{G}$ satisfies the odd cycle condition, i.e.,
if two odd holes $C_1$ and $C_2$ in $\overline{G}$ have no common
vertices, then there exists a bridge of $C_1$ and $C_2$ in $\overline{G}$.
\end{enumerate}
In particular, if $\QQ_G$ is normal, then $\PP_{\overline{G}}$ is normal.
\end{Theorem}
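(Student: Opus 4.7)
The plan is to combine Lemma~\ref{keylemma}, which gives $K[\QQ_G] \simeq K[\PP_{\overline{G}^\star}]$, with the edge-polytope criterion of \cite[Corollary~2.3]{OHnormal}. Since $\overline{G}^\star$ carries the unique loop $\{n+1,n+1\}$ and is connected (the vertex $n+1$ is universal in $\overline{G}^\star$), that corollary yields the chain of equivalences: $\PP_{\overline{G}^\star}$ is normal $\Longleftrightarrow$ $\PP_{\overline{G}^\star}$ has a unimodular covering $\Longleftrightarrow$ $\overline{G}^\star$ satisfies the odd cycle condition. Via Lemma~\ref{keylemma} this immediately delivers (i)~$\Leftrightarrow$~(ii) and reduces the theorem to translating the odd cycle condition on $\overline{G}^\star$ into condition (iii).

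The key observation for the translation is that $\alpha(G)=2$ forces $\overline{G}$ to be triangle-free, so every induced odd cycle of $\overline{G}$ is an odd hole. Moreover, since $n+1$ is adjacent to every vertex of $[n]$, any cycle through $n+1$ of length $\geq 4$ already contains the chord $\{n+1,v\}$ at any interior vertex $v$, so it is not induced. Consequently, the induced odd cycles of $\overline{G}^\star$ are exactly of three types: (a) odd holes of $\overline{G}$, (b) triangles $\{n+1,i,j\}$ with $\{i,j\} \in E(\overline{G})$, and (c) the loop at $n+1$. Any pair of such cycles in which at least one is of type (b) or (c) either shares the vertex $n+1$ or admits the bridge $\{n+1,k\}$ for any $k$ in the other cycle, so such mixed pairs impose no new constraint. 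Conversely, a bridge in $\overline{G}^\star$ between two odd holes of $\overline{G}$ must have both endpoints in $[n]$, hence lies in $\overline{G}$. Thus the odd cycle condition on $\overline{G}^\star$ is equivalent precisely to (iii).

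For the ``in particular'' assertion, suppose (iii) holds. If $\overline{G}$ is disconnected, two odd holes lying in distinct components could share no vertex and admit no bridge in $\overline{G}$, so (iii) forces odd holes of $\overline{G}$ into a single component. The remaining components are bipartite, hence vacuously satisfy the odd cycle condition, while the exceptional component inherits (iii) within itself. A componentwise application of \cite[Corollary~2.3]{OHnormal} to $\overline{G}$ then yields normality of $\PP_{\overline{G}}$.

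The step that I expect to require the most care is the reduction of the odd cycle condition from $\overline{G}^\star$ to $\overline{G}$: checking that every induced odd cycle of $\overline{G}^\star$ falls into one of the three families above, that the universal vertex $n+1$ supplies the required bridge in every mixed case, and that a bridge between two odd holes of $\overline{G}$ inside $\overline{G}^\star$ cannot involve $n+1$. The remainder is a direct appeal to results already cited in the paper.
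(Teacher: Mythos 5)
Your proposal is correct and follows essentially the same route as the paper's own proof: apply Lemma~\ref{keylemma} to identify $K[\QQ_G]$ with $K[\PP_{\overline{G}^\star}]$, invoke \cite[Corollary 2.3]{OHnormal} for the connected graph $\overline{G}^\star$, and then translate the odd cycle condition from $\overline{G}^\star$ down to $\overline{G}$. The only difference is that you spell out the details the paper dismisses as ``easy to see'' (the classification of induced odd cycles of $\overline{G}^\star$ and the componentwise argument for the final assertion about $\PP_{\overline{G}}$), and these details are all accurate.
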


\begin{proof}
Let $G$ be a simple graph on the vertex set $[n]$
with $\alpha(G) = 2$.
By Lemma \ref{keylemma}, we have $K[\QQ_G]
\simeq K[\PP_{\overline{G}^\star}]$.
Hence, by \cite[Corollary 2.3]{OHnormal},
conditions (i) and (ii) are equivalent, and they hold if and only if
$\overline{G}^\star$ satisfies the odd cycle condition.
(Note that $\overline{G}^\star$ is connected.)
Since the vertex $n+1$ is incident with any vertices of 
 $\overline{G}^\star$,
it is easy to see that  $\overline{G}^\star$
satisfies the odd cycle condition if and only if 
$\overline{G}$ satisfies the odd cycle condition.
\end{proof}

It is known \cite{hanrei} that there exists a graph $G$ such that 
$\PP_G$ is normal and that $I_{\PP_G}$ has no squarefree initial ideals.
In \cite{OhsugiInfinity}, infinite families of  such edge polytopes are given.
We can construct the stable set polytopes with the same properties.
Let $G_1 (p_1,\ldots, p_5)$ be a graph defined in \cite[Theorem 3.10]{OhsugiInfinity}.

\begin{Theorem}
\label{aninfinite}
Let $G$ be a graph whose complement is $G_1 (p_1,\ldots, p_5)$ with $p_i \ge 2$ for $i=1,\ldots, 5$.
Then, $\QQ_G$ is normal and
$I_{\QQ_G}$ has no squarefree initial ideals.

\end{Theorem}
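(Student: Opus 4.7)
The plan is to reduce everything to the known results about the edge polytope $\PP_{G_1(p_1,\ldots,p_5)}$ from \cite[Theorem 3.10]{OhsugiInfinity} via Lemma \ref{keylemma}. The first thing to check is that $\alpha(G)=2$, equivalently that $\overline{G}=G_1(p_1,\ldots,p_5)$ is triangle free with at least one edge; this should be immediate from the construction of $G_1(p_1,\ldots,p_5)$ in \cite{OhsugiInfinity} (typically built as a $5$-cycle whose edges are subdivided into paths of lengths $p_1,\ldots,p_5$). Once $\alpha(G)=2$ is verified, Lemma \ref{keylemma} gives the ring isomorphism $K[\QQ_G] \simeq K[\PP_{\overline{G}^\star}]$ that will transport both properties.

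For normality, I would apply Theorem \ref{a2normal}. By \cite[Theorem 3.10]{OhsugiInfinity}, $\PP_{\overline{G}}$ is normal, so by the characterization of normal edge polytopes from \cite{OHnormal, SVV}, every connected component of $\overline{G}$ satisfies the odd cycle condition, i.e.\ $\overline{G}$ itself satisfies the odd cycle condition. Theorem \ref{a2normal} then yields normality of $\QQ_G$. (As already observed in the proof of that theorem, adjoining the universal vertex $n+1$ to form $\overline{G}^\star$ does not affect the odd cycle condition, so no separate verification for $\overline{G}^\star$ is needed.)

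For the absence of squarefree initial ideals, I would argue by contradiction using the inheritance principle noted after Proposition \ref{cpure}. Suppose $I_{\QQ_G}$ had a squarefree initial ideal. Through the isomorphism $K[\QQ_G] \simeq K[\PP_{\overline{G}^\star}]$, this would give a squarefree initial ideal of $I_{\PP_{\overline{G}^\star}}$. Since $\overline{G}$ is an induced subgraph of $\overline{G}^\star$, Proposition \ref{cpure}(i) makes $\PP_{\overline{G}}$ a face of $\PP_{\overline{G}^\star}$, and the existence of a squarefree initial ideal is inherited by faces (cf.\ the remark following Proposition \ref{cpure}). Hence $I_{\PP_{\overline{G}}} = I_{\PP_{G_1(p_1,\ldots,p_5)}}$ would admit a squarefree initial ideal, contradicting \cite[Theorem 3.10]{OhsugiInfinity}.

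The only genuinely delicate point is the first step, namely confirming that the parameter choice $p_i\ge 2$ forces $\overline{G}=G_1(p_1,\ldots,p_5)$ to be triangle free so that the stability number of $G$ is exactly $2$ and Lemma \ref{keylemma} applies; everything else is an essentially formal chain of implications bouncing between $\overline{G}$ and $\overline{G}^\star$ and between the edge and stable set polytope pictures. No new combinatorial analysis of odd cycles or initial ideals is required, since both hard properties have already been established for $\PP_{G_1(p_1,\ldots,p_5)}$ in \cite{OhsugiInfinity}.
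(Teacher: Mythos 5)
Your proposal is correct and follows essentially the same route as the paper: verify $\alpha(G)=2$ from triangle-freeness of $G_1(p_1,\ldots,p_5)$, get normality from the odd cycle condition via Theorem \ref{a2normal}, and rule out squarefree initial ideals by passing the property from $I_{\QQ_G}\simeq I_{\PP_{\overline{G}^\star}}$ down to the face $\PP_{\overline{G}}$ using Lemma \ref{keylemma} and Proposition \ref{cpure}. The only cosmetic difference is that you deduce the odd cycle condition from the normality of $\PP_{G_1(p_1,\ldots,p_5)}$ rather than citing it directly, which is harmless since that graph is connected.
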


\begin{proof}
Since $\overline{G}$ has no triangles, we have $\alpha(G) = 2$.
Since $G_1 (p_1,\ldots, p_5)$ satisfies the odd cycle condition, 
$\QQ_G$ is normal by Theorem \ref{a2normal}.
On the other hand, $I_{\overline{G}}$ has no squarefree initial ideals.
Since $\overline{G}$ is an induced subgraph of $\overline{G}^\star$,
$I_{\QQ_G}$ has no squarefree initial ideals
by Lemma \ref{keylemma} and Proposition \ref{cpure}.
\end{proof}

It seems to be a challenging problem to characterize the normal steble set polytopes
with large stability number.
We give several necessary conditions.
The following is a consequence of Proposition \ref{cpure} and Theorem \ref{a2normal}.

\begin{Proposition}
Let $G$ be a simple graph.
Suppose that $\QQ_G$ is normal.
Then, any two odd holes of $\overline{G}$ without common vertex have a bridge in $\overline{G}$.
\end{Proposition}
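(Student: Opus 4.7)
The plan is to reduce the claim to Theorem \ref{a2normal} applied to a suitable induced subgraph. Fix two odd holes $C_1$ and $C_2$ of $\overline{G}$ with $V(C_1) \cap V(C_2) = \emptyset$, set $V' := V(C_1) \cup V(C_2)$, and let $G' := G[V']$. First I would invoke Proposition \ref{cpure}(ii) to get that $\QQ_{G'}$ is a face of $\QQ_G$, and then use the fact (recorded in the remark following Proposition \ref{cpure}) that normality is inherited by faces to conclude that $\QQ_{G'}$ is normal. Note that $C_1$ and $C_2$ remain induced odd cycles of length $\geq 5$ inside $\overline{G'} = \overline{G}[V']$.

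The main step is a dichotomy according to whether $\overline{G'}$ is triangle-free. If it is, then $\alpha(G') \leq 2$, and since any edge of $C_1$ witnesses a stable pair of $G'$, in fact $\alpha(G') = 2$. Theorem \ref{a2normal} then applies to $G'$ and forces $\overline{G'}$ to satisfy the odd cycle condition; because $C_1$ and $C_2$ are induced odd cycles of $\overline{G'}$ without a common vertex, this yields a bridge of $C_1$ and $C_2$ in $\overline{G'}$, which is also a bridge in $\overline{G}$.

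The main obstacle is the complementary case, in which $\overline{G'}$ contains a triangle $T$ and Theorem \ref{a2normal} is not directly applicable. Here I would argue as follows: since $V(C_1)$ and $V(C_2)$ are disjoint and each $\overline{G}[V(C_i)] = C_i$ is triangle-free as an induced cycle of length $\geq 5$, the three vertices of $T$ cannot all lie in $V(C_1)$ nor all in $V(C_2)$. Therefore $T$ must contain a vertex $a \in V(C_1)$ and a vertex $b \in V(C_2)$, and the edge $\{a,b\}$ of $\overline{G}$ is a bridge of $C_1$ and $C_2$ in $\overline{G}$. Once one sees that a triangle in $\overline{G'}$ must straddle $V(C_1)$ and $V(C_2)$, no further work is required.
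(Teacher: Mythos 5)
Your proof is correct and takes essentially the same route as the paper: restrict to the induced subgraph on $V(C_1)\cup V(C_2)$, use Proposition~\ref{cpure} to transfer normality to this face, and apply Theorem~\ref{a2normal}. The paper simply argues by contrapositive---assuming there is no bridge forces $\overline{G}[V(C_1)\cup V(C_2)]$ to be the disjoint union $C_1\sqcup C_2$, so the stability number is $2$ automatically and the triangle dichotomy you set up (and resolve correctly) never arises.
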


\begin{proof}
Suppose that two odd holes $C_1$, $C_2$ of $\overline{G}$ without common vertices have no bridges in $\overline{G}$.
Let $H$ be an induced subgraph of $G$ whose vertex set is that of $C_1 \cup C_2$.
Then, $\alpha(H) =2$ and hence $\QQ_H$ is not normal by Theorem \ref{a2normal}.
Thus, $\QQ_G$ is not normal by Proposition \ref{cpure}.
\end{proof}

Similar conditions are required for antiholes of $\overline{G}$.

\begin{Proposition}
Let $G$ be a simple graph.
Suppose that $\QQ_G$ is normal.
Then, $G$ satisfies all of the following conditions:
\begin{enumerate}
\item[(i)]
Any two odd antiholes of $\overline{G}$ having no common vertices have a bridge in $\overline{G}$.
\item[(ii)]
Any two odd antiholes of $\overline{G}$ of length $\ge 7$ having exactly one common vertex have a bridge in $\overline{G}$.
\item[(iii)]
Any odd hole and odd antihole of $\overline{G}$ having no common vertices have a bridge in $\overline{G}$.
\end{enumerate}
\end{Proposition}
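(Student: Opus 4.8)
The plan is to prove the contrapositive of each item by localizing the obstruction to an induced subgraph and then exhibiting an explicit lattice point of a dilate $h\,\QQ_G$ that is not a nonnegative integer combination of vertices. Write $\chi$ and $\chi_f$ for the chromatic and fractional chromatic numbers. The key observation is the following dichotomy. For a vertex set $U$ put $H=G[U]$; since the stable set polytope is down-closed (subsets of stable sets are stable), the all-ones vector $\rho(U)=\sum_{j\in U}\eb_j$ satisfies $\rho(U)\in h\,\QQ_H$ exactly when $h\ge \chi_f(H)$, whereas $\rho(U)$ is a sum $\sum_{i=1}^h\rho(W_i)$ of $h$ vertices of $\QQ_G$ exactly when $U$ splits into $h$ stable sets, i.e. $h\ge\chi(H)$ (each $W_i\subseteq U$ automatically, as $\rho(U)$ is supported on $U$). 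Hence any $U$ and any integer $h$ with $\chi_f(G[U])\le h<\chi(G[U])$ produce a lattice point $\rho(U)\in h\,\QQ_G$ that is not a sum of $h$ vertices, so $\QQ_G$ is not normal; by Proposition \ref{cpure} it suffices to realize such a gap on the subgraph spanned by the two cycles in question.

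For (i), suppose $\overline G$ has odd antiholes $A_1,A_2$ with no common vertex and no bridge in $\overline G$; equivalently each $G[A_i]$ is an induced odd cycle $C_{2p_i+1}$ with $p_i\ge2$, and every vertex of $A_1$ is joined in $G$ to every vertex of $A_2$. Then $H=G[A_1\cup A_2]$ is the join of $C_{2p_1+1}$ and $C_{2p_2+1}$, so every stable set of $H$ lies entirely in $A_1$ or in $A_2$; consequently $\chi(H)=\chi(C_{2p_1+1})+\chi(C_{2p_2+1})=6$ while $\chi_f(H)=(2+\tfrac1{p_1})+(2+\tfrac1{p_2})\le5$, and $U=A_1\cup A_2$, $h=5$ is the required witness. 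For (iii) the same scheme applies with $U=A\cup B$, where $G[A]\cong C_{2k+1}$ is the antihole and $G[B]\cong\overline{C_{2\ell+1}}$ comes from the odd hole $B$ of $\overline G$; here $H$ is the join of $C_{2k+1}$ and $\overline{C_{2\ell+1}}$, and using $\chi(\overline{C_{2\ell+1}})=\ell+1$ and $\chi_f(\overline{C_{2\ell+1}})=\tfrac{2\ell+1}{2}$ one gets $\chi(H)=\ell+4$ but $\chi_f(H)=\ell+\tfrac52+\tfrac1k\le\ell+3$, so $h=\ell+3$ works.

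The delicate case, and the main obstacle, is (ii). Now $A_1,A_2$ are odd antiholes of length $\ge7$ meeting in a single vertex $v$ with no bridge, so $H=G[A_1\cup A_2]$ is two odd cycles $C_{2p+1},C_{2q+1}$ ($p,q\ge3$) glued at $v$ and complete between their private parts. A stable set of $H$ still cannot meet both $A_1\setminus v$ and $A_2\setminus v$, and the colouring argument above gives $\chi(H)=5$; but now $\chi_f(H)=4+\tfrac1{\max(p,q)}\in(4,5)$, so $\lceil\chi_f(H)\rceil=5=\chi(H)$ and the all-ones vector no longer separates the cone from the semigroup (this is also why pentagons, excluded in (ii), behave well). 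The remedy I would pursue is a multiplicity-weighted witness: replace $\rho(A_1\cup A_2)$ by $\rho$ of a suitable multiset of $A_1\cup A_2$, equivalently a lattice point lying on the two odd-cycle facets $\sum_{A_1}x_i\le p$ and $\sum_{A_2}x_i\le q$ at once, and show that the rounding gap between the fractional and integral weighted colourings persists. Concretely one analyses, for $C_{2p+1}$, the convex function $f(c)=\chi_f\bigl(C_{2p+1};\,\text{all vertices but }v\mapsto \text{their weights},\ v\mapsto c\bigr)$ describing how the two cycles share the weight at $v$, and one must choose weights so that $f$ is non-integral at both integers bracketing the optimal split while every ``corner'' split (covering $v$ entirely from one cycle, where the other contributes only a path/bipartite cover) stays expensive; the hypothesis length $\ge7$ is precisely what keeps these values off the integers. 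Producing such a multiset and checking that it lies in the cone but not in the semigroup is the one genuinely computational step, and is where I expect the real difficulty to lie.
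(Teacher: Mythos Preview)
Your framework via $\chi_f$ versus $\chi$ is correct and elegant: the vector $(\rho(U),h)$ witnesses non-normality precisely when $\chi_f(G[U])\le h<\chi(G[U])$, and for (i) and (iii) this recovers exactly the paper's witnesses (the all-ones vector on the union of the two cycles, at heights $h=5$ and $h=k+3$ respectively). This is the same proof as the paper's, only phrased more conceptually; the paper writes out the rational combinations and the ceiling counts by hand rather than invoking $\chi_f$ and $\chi$.

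The genuine gap is in (ii). You correctly diagnose why the all-ones vector fails---$\lceil\chi_f(H)\rceil=\chi(H)=5$---and you correctly predict that a weighted witness is needed, but you do not produce one; you end with ``this is where I expect the real difficulty to lie.'' Without an explicit multiset and the two verifications (cone membership, semigroup non-membership), the proof of (ii) is not complete.

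The paper's witness is simple and fits your framework perfectly: take weight $3$ at the common vertex $v=i_1$ and weight $1$ at every other vertex of $A_1\cup A_2$, at height $h=5$; that is, $\alpha=(\rho(U)+2\eb_v,\,5)$. For the cone side, the paper uses the $2p-1$ maximum stable sets of $C_{2p+1}$ that contain $v$ or both neighbours of $v$, each with weight $\tfrac1{p-1}$, the analogous $2q-1$ sets on the $A_2$ side with weight $\tfrac1{q-1}$, and the singleton $\{v\}$ carrying the residual weight $1-\tfrac1{p-1}-\tfrac1{q-1}$; the hypothesis $p,q\ge3$ (length $\ge7$) is exactly what makes this residual nonnegative, confirming your intuition about why pentagons are excluded. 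For the semigroup side, any decomposition $\alpha=\sum_{i=1}^5(\rho(W_i),1)$ forces, by the size bounds $|W\cap A_1|\le p$ and $|W\cap A_2|\le q$, at least two $W_i$ inside each $A_j$; up to symmetry two of them lie in $A_1$ and exhaust $A_1\setminus\{v\}$, so the remaining three must each contain $v$---but then neither neighbour of $v$ in $C_{2q+1}$ can appear in any $W_i$, a contradiction. This short computation is precisely the step you left undone.
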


\begin{proof}
Let $G$ be a graph on the vertex set $[n]$.
Let 
$
{\mathcal A} = \{ (\rho(W),1)^t : W \in S(G) \} %= \{ \alpha_1,  \ldots, \alpha_m \}
$.
It is known \cite[Proposition 13.5]{Stu} that 
$\QQ_G$ is normal if and only if 
we have $\ZZ_{\ge 0} {\mathcal A}  = {\mathbb Q}_{\ge 0} {\mathcal A} \cap \ZZ^{n+1}$.

(i)
Let $C_1 = (i_1,\ldots, i_{2k +1})$ and $C_2 = (j_1,\ldots, j_{2\ell +1})$ be
 odd antiholes in $\overline{G}$ having no common vertices and no bridges in $\overline{G}$.
By Proposition \ref{cpure}, we may assume that $\overline{G} = C_1 \cup C_2$.
Then, 
\begin{eqnarray*}
\sum_{W \in S(\overline{C_1}) \mbox{ and } |W| = k} (\rho(W) ,1) &=& (2k+1) \eb_{n+1} + k \sum_{p=1}^{2k+1} \eb_{i_p},\\
\sum_{W \in S(\overline{C_2}) \mbox{ and } |W| = \ell} (\rho(W),1 )  &=& (2\ell+1) \eb_{n+1} + \ell \sum_{q=1}^{2\ell+1} \eb_{j_q}.
\end{eqnarray*}
Since $k, \ell \ge 2$, we have $k \ell - k - \ell \ge 0$.
Hence,
\begin{eqnarray*}
 & & 5 \eb_{n+1} + \sum_{p=1}^{2k+1} \eb_{i_p} +  \sum_{q=1}^{2\ell+1} \eb_{j_q}\\
&=&
\frac{1}{k} \left((2k+1) \eb_{n+1} + k \sum_{p=1}^{2k+1} \eb_{i_p}\right)
+
\frac{1}{\ell} \left((2\ell+1) \eb_{n+1} + \ell \sum_{q=1}^{2\ell+1} \eb_{j_q}\right)
+
\frac{k \ell - k - \ell}{k \ell} \eb_{n+1}
\end{eqnarray*}
belongs to ${\mathbb Q}_{\ge 0} {\mathcal A} \cap \ZZ^{n+1}$.
However, this vector is not in $\ZZ_{\ge 0} {\mathcal A}$
since $\max \{ |W| : W \in S(\overline{C_1})  \} = k$, $\max \{ |W| : W \in S(\overline{C_2})  \} = \ell$, and
$\lceil (2k+1) /k \rceil + \lceil (2\ell+1) /\ell \rceil = 6 >5$.

(ii)
Let $C_1 = (i_1,\ldots, i_{2k +1})$ and $C_2 = (j_1,\ldots, j_{2\ell +1})$ be
 odd antiholes in $\overline{G}$ of length $\ge 7$ having exactly one common vertex $i_1 = j_1$ and no bridges in $\overline{G}$.
By Proposition \ref{cpure}, we may assume that $\overline{G} = C_1 \cup C_2$.
Let
\begin{eqnarray*}
S_1 &=& \{ W \in S(\overline{C_1}) : |W| = k \mbox{ and either } i_1 \in W \mbox{ or } \{i_2,i_{2k+1}\} \subset W \},\\
S_2 &=& \{ W \in S(\overline{C_2}) : |W| = \ell \mbox{ and either } j_1 \in W \mbox{ or } \{j_2,j_{2\ell+1}\} \subset W \}.
\end{eqnarray*}
Then, 
\begin{eqnarray*}
\sum_{W \in S_1} (\rho(W) ,1) &=& (2k-1) \eb_{n+1} + k \eb_{i_1} + (k-1) \sum_{p=2}^{2k+1} \eb_{i_p},\\
\sum_{W \in S_2} (\rho(W),1 )  &=& (2\ell-1) \eb_{n+1} + \ell \eb_{i_1} + (\ell-1) \sum_{q=2}^{2\ell+1} \eb_{j_q}.
\end{eqnarray*}
Since $k, \ell \ge 3$, we have $0 < 1/(k-1) + 1/(\ell-1) \le 1$.
Hence,
\begin{eqnarray*}
\alpha & :=& 5 \eb_{n+1} + 3 \eb_{i_1} + \sum_{p=2}^{2k+1} \eb_{i_p} +  \sum_{q=2}^{2\ell+1} \eb_{j_q}\\
&\; =&
\frac{1}{k-1} \left( (2k-1) \eb_{n+1} + k \eb_{i_1} + (k-1) \sum_{p=2}^{2k+1} \eb_{i_p}\right)\\
 & & +
\frac{1}{\ell-1} \left((2\ell-1) \eb_{n+1} + \ell \eb_{i_1} + (\ell-1) \sum_{q=2}^{2\ell+1} \eb_{j_q}\right)
+
\left( 1- \frac{1}{k-1}-\frac{1}{\ell-1} \right) (\eb_{i_1} + \eb_{n+1})
\end{eqnarray*}
belongs to ${\mathbb Q}_{\ge 0} {\mathcal A} \cap \ZZ^{n+1}$.
Suppose that $\alpha = (\rho(W_1), 1) + \cdots +    (\rho(W_5), 1) $
for $W_i \in S(\overline{G})$.
Then, each $W_i$ belongs to either $S(\overline{C_1})$ or $S(\overline{C_2})$.
Since $\max \{ |W| : W \in S(\overline{C_1})  \} = k$, $\max \{ |W| : W \in S(\overline{C_2})  \} = \ell$, 
we have
\begin{eqnarray*}
|\{W_1,\ldots, W_5\} \cap S(\overline{C_1})| &\ge& 2,\\
|\{W_1,\ldots, W_5\} \cap S(\overline{C_2})| &\ge& 2.
\end{eqnarray*}
We may assume that $W_1, W_2 \in S(\overline{C_1})$ and $W_3,W_4,W_5 \in S(\overline{C_2})$.
It then follows that $\rho(W_1) + \rho(W_2) = \sum_{p=2}^{2k+1} \eb_{i_p}$, and hence
$\rho(W_3) + \rho(W_4) + \rho(W_5) =   3 \eb_{i_1} + \sum_{q=2}^{2\ell+1} \eb_{j_q} $.
Hence $i_1 \in W_3 \cap W_4 \cap W_5$.
Thus, $i_2, i_{2\ell+1} \notin W_3, W_4, W_5$, a contradiction.
Therefore, we have $\alpha \notin \ZZ_{\ge 0} {\mathcal A}$.

(iii)
Let $C_1 = (i_1,\ldots, i_{2k +1})$ be an odd hole and $C_2 = (j_1,\ldots, j_{2\ell +1})$ an odd antihole in $\overline{G}$ having no common vertices.
By Proposition \ref{cpure}, we may assume that $\overline{G} = C_1 \cup C_2$.
Then, 
\begin{eqnarray*}
\sum_{W \in S(\overline{C_1}) \mbox{ and } |W| = 2}  (\rho(W) ,1) &=& (2k+1) \eb_{n+1} + 2\sum_{p=1}^{2k+1} \eb_{i_p},\\
\sum_{W \in S(\overline{C_2}) \mbox{ and } |W| = \ell}   (\rho(W) ,1)   &=& (2\ell+1) \eb_{n+1} + \ell \sum_{q=1}^{2\ell+1} \eb_{j_q}.
\end{eqnarray*}
Hence, 
\begin{eqnarray*}
 & & (k+3) \eb_{n+1} + \sum_{p=1}^{2k+1} \eb_{i_p} +  \sum_{q=1}^{2\ell+1} \eb_{j_q}\\
&=&
\frac{1}{2} \left((2k+1) \eb_{n+1} + 2 \sum_{p=1}^{2k+1} \eb_{i_p}\right)
+
\frac{1}{\ell} \left((2\ell+1) \eb_{n+1} + \ell \sum_{q=1}^{2\ell+1} \eb_{j_q}\right)
+
\frac{\ell - 2}{2 \ell} \eb_{n+1}
\end{eqnarray*}
belongs to ${\mathbb Q}_{\ge 0} {\mathcal A} \cap \ZZ^{n+1}$.
However, this vector is not in $\ZZ_{\ge 0} {\mathcal A}$
since $\max \{ |W| : W \in S(\overline{C_1})  \} = 2$, $\max \{ |W| : W \in S(\overline{C_2})  \} = \ell$,
and
$\lceil (2k+1) /2 \rceil + \lceil (2\ell+1) /\ell \rceil =k+4 >k+3$.
\end{proof}

Unfortunately, the above conditions are not sufficient to be normal in general.
For example, if the length of the two odd antiholes of $\overline{G}$ without common vertices are long, then a lot of bridges 
in $\overline{G}$ seem to be needed.

\section{Generators and Gr\"obner bases of $I_{\QQ_G}$}

For a toric ideal $I$, let $\mu(I)$ be the maximum degree of binomials in a minimal set of binomial generators of $I$.
If $I=\{0\}$, then we set $\mu(I)=0$.
In this section, we study $\mu(I_{\QQ_G})$ by using results on the toric ideals of edge polytopes.

Let $G$ be a graph on the vertex set $[n]$ allowing loops and having no multiple edges.
Let $E(G)=\{e_1, \ldots, e_m\}$ be a set of all edges and loops of $G$.
The toric ideal $I_{\PP_G}$ is the kernel of a homomorphism
$\pi : K[y_1,\ldots, y_m] \rightarrow K[x_1,\ldots, x_n]$
defined by $\pi(y_i) = x_k x_\ell$ where $e_i = \{k,\ell \}$.
A walk of length $q$ of $G$ connecting $v_1\in [n]$ and $v_{q+1} \in[n]$ is 
a finite sequence of the form
\begin{equation}
\label{walk}
\Gamma =
 (
\{v_1,v_2\},
\{v_2,v_3\},
\ldots,
\{v_{q},v_{q+1}\}
)
\end{equation}
with each $\{v_k,v_{k+1}\} \in E(G)$.
An {\em even} (resp.~{\em odd}) {\em walk} is a walk of even (resp.~odd) length.
A walk $\Gamma$ of the form (\ref{walk}) is called {\em closed} if $v_{q+1} =v_1$. 
Given an even closed walk
$
\Gamma = (e_{i_1},e_{i_2},\ldots,e_{i_{2q}})
$
of $G$, we write $f_\Gamma$ for the binomial
$$
f_\Gamma = 
\prod_{k=1}^q y_{i_{2k-1}} -
\prod_{k=1}^q y_{i_{2k}}
\in I_{\PP_G}.
$$
We regard a loop as an odd cycle of length 1.
It is known (\cite{OHquad, Stu, Vil}) that 

\begin{Proposition}
\label{ecw}
Let $G$ be a graph having at most one loop.
Then, $I_{\PP_G}$ is generated by all the binomials
$
f_\Gamma
$,
where 
$\Gamma$ is an even closed walk of $G$.
In particular, $I_{\PP_G}=(0)$ if and only if 
each connected component of $G$ has at most one cycle and the cycle is odd.
\end{Proposition}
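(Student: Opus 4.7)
The plan is to prove the two assertions separately, following standard arguments for toric ideals of (nonsimple) edge polytopes.

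For the generating-set assertion, I argue by induction on the degree $d$ of a homogeneous binomial $f = y_{i_1}\cdots y_{i_d} - y_{j_1}\cdots y_{j_d}$ in $I_{\PP_G}$. Since $\pi(f)=0$, the two multisets $\{e_{i_1},\ldots,e_{i_d}\}$ and $\{e_{j_1},\ldots,e_{j_d}\}$ have identical vertex multisets (a loop at $v$ contributing $v$ twice). Starting from $e_{i_1}=\{v_1,v_2\}$, the vertex $v_2$ must be covered by some $e_{j_{k_1}}=\{v_2,v_3\}$; then $v_3$ must be covered by some $e_{i_{k_2}}=\{v_3,v_4\}$, and so on, alternating between the $i$- and $j$-multisets. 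Because the vertex set is finite, a vertex eventually repeats, producing an even closed walk $\Gamma$ whose odd-position edges come from the first multiset and even-position edges from the second. Writing $f_\Gamma = M_1 - M_2$, we have $M_1 \mid y_{i_1}\cdots y_{i_d}$ and $M_2 \mid y_{j_1}\cdots y_{j_d}$; subtracting a suitable monomial multiple of $f_\Gamma$ from $f$ reduces to a binomial in $I_{\PP_G}$ of strictly smaller degree, closing the induction.

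For the ``in particular'' assertion, I analyze the edge vectors of $G$, namely $\{\eb_k+\eb_\ell : \{k,\ell\} \in E(G)\} \cup \{2\eb_k : \mbox{loop at } k\}$. For the sufficient direction, suppose each connected component has at most one cycle and that cycle, if any, is odd (a loop being an odd cycle of length one). On a tree with $n'$ vertices, the $n'-1$ edge vectors are linearly independent and span the hyperplane cut out by the unique $\pm 1$ two-coloring. If the component has a single odd cycle, the closing edge (or loop) connects two vertices of equal color, so its edge vector lies outside that hyperplane; together with the tree part this gives $n'$ linearly independent vectors. Independence across components is automatic, so no nontrivial integer relation exists among the generators of $K[\PP_G]$, whence $\pi$ is injective and $I_{\PP_G}=(0)$.

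For the converse, suppose the hypothesis fails; I exhibit a nonzero $f_\Gamma$. If some component contains an even cycle $C$, then $C$ itself is an even closed walk whose two alternating edge sets are disjoint, so $f_C\ne 0$. Otherwise a component contains two distinct odd cycles $C_1, C_2$ (possibly loops or sharing vertices); picking a walk $P$ connecting them and setting $\Gamma=(C_1,P,C_2,P^{-1})$ yields an even closed walk of length $|C_1|+2|P|+|C_2|$, whose binomial is nonzero because the odd-cycle edges each appear on only one side of the alternation while the $P$-edges contribute equally to both. The main obstacle in the converse direction is verifying nontriviality of $f_\Gamma$ in degenerate situations (two odd cycles meeting at a vertex, a loop as one of them, $P$ overlapping $C_i$), which requires a careful bookkeeping of the multiplicity of each $y_i$ on each side of the resulting binomial.
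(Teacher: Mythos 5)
The paper gives no proof of this proposition at all --- it is quoted verbatim from the cited references \cite{OHquad, Stu, Vil} --- so your argument can only be measured against the standard proofs there, which it essentially follows. Your treatment of the ``in particular'' part is fine: a family of Laurent monomials is algebraically independent exactly when its exponent vectors are linearly independent, and your two-coloring computation correctly shows independence when every component has at most one cycle and that cycle is odd. Note, moreover, that the same rank count settles the converse more cleanly than your explicit walks: a connected component on $n'$ vertices has edge vectors of rank $n'-1$ if it is bipartite and $n'$ otherwise, so a component with two independent cycles, or with a unique cycle that is even, has more edges than the rank; the resulting linear dependence already yields a nonzero binomial in $I_{\PP_G}$, and all of the degenerate bookkeeping you flag (loops, cycles meeting at a vertex, the path overlapping the cycles) can be avoided entirely.

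The genuine gap is in the generation argument. The step ``because the vertex set is finite, a vertex eventually repeats, producing an even closed walk $\Gamma$'' is false as stated: the first repeated vertex may close an \emph{odd} walk, whose edges then do not alternate consistently between the two multisets, so it produces no binomial $f_\Gamma = M_1 - M_2$ with $M_1 \mid y_{i_1}\cdots y_{i_d}$ and $M_2 \mid y_{j_1}\cdots y_{j_d}$, and the induction does not close. Concretely, let $G$ consist of the two triangles $(1,2,3)$ and $(3,4,5)$ glued at the vertex $3$, and consider the binomial $y_{\{1,2\}}y_{\{3,4\}}y_{\{3,5\}}-y_{\{2,3\}}y_{\{4,5\}}y_{\{1,3\}} \in I_{\PP_G}$: every admissible trace starting from $\{1,2\}$ runs $1\to 2\to 3\to 4\to 5\to 3$ (or $1\to 2\to 3\to 5\to 4\to 3$) and first returns to a visited vertex after an odd closed walk. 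The standard repair is either to continue past an odd closure until a second odd closed walk appears and splice the two together with the connecting path traversed twice, or, more cleanly, to fix at each vertex a bijection between the edge-ends contributed by the first monomial and those contributed by the second (possible since the two local degrees agree because $\pi$ kills the binomial) and follow that pairing; the walk is then forced to close with even length and consistent alternation, the whole edge multiset decomposes into even closed walks $\Gamma_1,\dots,\Gamma_r$, and a telescoping sum expresses the binomial in terms of $f_{\Gamma_1},\dots,f_{\Gamma_r}$. Without one of these devices your inductive step fails on examples like the one above.
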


The following theorem implies that the set of binomial generators of $I_{\QQ_G}$ is also
characterized by the graph theoretical terminology if $\alpha(G)=2$.

\begin{Theorem}
\label{generators}
Let $G$ be a simple graph with $\alpha(G) = 2$.
Then, $I_{\QQ_G} = I_{\PP_{\overline{G}}} + J$ where $J$ is an ideal generated by 
quadratic binomials $f_\Gamma$ where $\Gamma$
is an even closed walk of $\overline{G}^\star$ that satisfies one of the following:
\begin{enumerate}
\item[(i)]
$\Gamma = (\{i,j\},\{j,k\},\{k,n+1\},\{n+1,i\})$ is a cycle where $\{i,j\}$, $\{j,k\} \in E(\overline{G})$;
%$\Gamma$ is a cycle of length $4$ containing the vertex $n+1$;
\item[(ii)]
$\Gamma = (\{i,j\},\{j,n+1\},\{n+1,n+1\},\{n+1,i\})$ where $\{i,j\} \in E(\overline{G})$.
\end{enumerate}
In particular,
$\mu(I_{\QQ_G}) = 
\max\{ 
\mu(I_{\PP_{\overline{G}}}) , 2
\}$.
%In particular, 
%$\mu(I_{\QQ_G})=2$ if and only if 
%either $\mu(I_{\PP_{\overline{G}}}) =2$
%or $I_{\PP_{\overline{G}}} =\{0\}$.
\end{Theorem}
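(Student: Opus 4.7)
By Lemma~\ref{keylemma}, $K[\QQ_G]\simeq K[\PP_{\overline{G}^\star}]$; relabel the polynomial ring as $K[y_e,z_i,w]$ with $y_e\mapsto x_ix_j$ (for $e=\{i,j\}\in E(\overline{G})$), $z_i\mapsto x_ix_{n+1}$, and $w\mapsto x_{n+1}^2$. It suffices to show $I_{\PP_{\overline{G}^\star}}=I_{\PP_{\overline{G}}}+J$, where in these coordinates $J$ is generated by the type-(i) binomials $y_{\{i,j\}}z_k-y_{\{j,k\}}z_i$ (for $\{i,j\},\{j,k\}\in E(\overline{G})$) and the type-(ii) binomials $y_{\{i,j\}}w-z_iz_j$ (for $\{i,j\}\in E(\overline{G})$). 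The inclusion $\supseteq$ is immediate. Since $\overline{G}^\star$ has a unique loop (at $n+1$) and no multiple edges, Proposition~\ref{ecw} supplies a generating set $\{f_\Gamma:\Gamma\text{ even closed walk in }\overline{G}^\star\}$ of $I_{\PP_{\overline{G}^\star}}$, so it remains to verify each $f_\Gamma\in I_{\PP_{\overline{G}}}+J$.

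I would argue by induction on the length $2q$ of $\Gamma$. If $\Gamma$ is contained in $\overline{G}$, then $f_\Gamma\in I_{\PP_{\overline{G}}}$. Otherwise $\Gamma$ meets $n+1$. In the base case $2q=4$, a finite case analysis, sorting by how many of the four edges of $\Gamma$ are in $E(\overline{G})$, are of the form $\{v,n+1\}$, or are the loop, shows that $f_\Gamma$ is either zero, a generator of $J$ of type (i) or (ii), or a degree-2 element of $I_{\PP_{\overline{G}}}$. For $2q\ge 6$: if $\Gamma$ visits $n+1$ at two positions $i<j$ with $j-i$ even, cut $\Gamma$ into two shorter even closed subwalks $\Gamma_1,\Gamma_2$ through $n+1$, and use the algebraic identity $f_\Gamma=AC-BD=A\,f_{\Gamma_2}+D\,f_{\Gamma_1}$ (where $A,B,C,D$ are the monomial blocks of odd- and even-indexed edges in the two halves) together with the inductive hypothesis. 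If no two same-parity visits to $n+1$ exist, apply a type-(i) swap on a pair (edge in $\overline{G}$, edge $\{v,n+1\}$) to shift the parity of one visit to $n+1$; if the loop appears in $\Gamma$, use a type-(ii) swap to exchange $y_ew$ for $z_iz_j$ (or conversely) to restructure the walk into one that admits a clean even split.

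The $\mu$-formula then follows from the decomposition. The upper bound $\mu(I_{\QQ_G})\le\max\{\mu(I_{\PP_{\overline{G}}}),2\}$ is immediate since $J$ is generated in degree $2$. For the reverse, $\mu(I_{\QQ_G})\ge 2$ because $I_{\QQ_G}$ contains the nonzero degree-2 binomial $y_{\{i,j\}}w-z_iz_j$ for any $\{i,j\}\in E(\overline{G})$, and such an edge exists since $\alpha(G)=2$. Moreover $\mu(I_{\QQ_G})\ge\mu(I_{\PP_{\overline{G}}})$, because $\PP_{\overline{G}}$ is a face of $\PP_{\overline{G}^\star}$ by Proposition~\ref{cpure}: specializing $z_i=w=0$ in a minimal generating set of $I_{\PP_{\overline{G}^\star}}$ produces a generating set of $I_{\PP_{\overline{G}}}$ of no greater degree. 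The main obstacle is the rearrangement step in the induction --- showing that the type-(i) and type-(ii) relations are collectively rich enough to reduce any even closed walk in $\overline{G}^\star$ to one that splits evenly at $n+1$ --- which I expect to rely on the triangle-free structure of $\overline{G}$ coming from $\alpha(G)=2$.
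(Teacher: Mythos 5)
Your setup is the same as the paper's: reduce via Lemma~\ref{keylemma} to showing $I_{\PP_{\overline{G}^\star}}=I_{\PP_{\overline{G}}}+J$, invoke Proposition~\ref{ecw} to reduce to binomials $f_\Gamma$ of even closed walks, and induct on the length of $\Gamma$. But the heart of the argument --- the step you yourself flag as ``the main obstacle'' --- is missing, and it is not a formality. Your induction only works cleanly when $\Gamma$ visits $n+1$ at two positions of even separation, and that hypothesis fails in the most basic nontrivial cases: every odd cycle $(i_1,\ldots,i_{2\ell+1})$ of $\overline{G}$ yields an even cycle of $\overline{G}^\star$ of length $2\ell+2$ passing through $n+1$ exactly \emph{once}, so there is no pair of visits to split at. Your proposed remedy (``apply a type-(i) swap to shift the parity of one visit'') is exactly the content that needs to be proved, and as stated it is not an argument; nor is it clear why triangle-freeness of $\overline{G}$ would be the relevant mechanism (in fact it is not needed).

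The gap can be closed by a single uniform reduction that makes the parity bookkeeping unnecessary. Take a minimal-degree $f_\Gamma\notin I_{\PP_{\overline{G}}}+J$; minimality forces $f_\Gamma$ to be irreducible and forces $\Gamma$ to pass through $n+1$, so after rotating write $\Gamma=(\{n+1,p\},\{p,q\},\{q,r\},\Gamma')$ with $p,q,r$ distinct and $q\ne n+1$ (otherwise $\Gamma$ returns to a vertex after two steps and $f_\Gamma$ is reducible). Since $n+1$ is joined to every vertex of $\overline{G}^\star$ (including itself), $\{r,n+1\}$ is always an edge, so you may set $\Gamma_1=(\{n+1,p\},\{p,q\},\{q,r\},\{r,n+1\})$ and $\Gamma_2=(\{n+1,r\},\Gamma')$; then the identity
$f_\Gamma=y_{\{p,q\}}\,f_{\Gamma_2}+\bigl(\prod_{k\ge 5,\ k \text{ odd}} y_{e_k}\bigr) f_{\Gamma_1}$
exhibits $f_\Gamma\in\langle f_{\Gamma_1},f_{\Gamma_2}\rangle$, where $f_{\Gamma_1}\in J$ (it is of type (i), or of type (ii) if $p$ or $r$ equals $n+1$) and $\deg f_{\Gamma_2}<\deg f_\Gamma$, contradicting minimality. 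In other words: you do not need to engineer an even split at $n+1$; you shorten the walk by replacing the length-three path $n+1,p,q,r$ with the single edge $\{n+1,r\}$ at the cost of one quadratic relation in $J$. Your base case and your derivation of the $\mu$-formula (upper bound from $J$ being quadratic, $\mu\ge 2$ from any edge of $\overline{G}$, and $\mu(I_{\QQ_G})\ge\mu(I_{\PP_{\overline{G}}})$ from Proposition~\ref{cpure}) are fine.
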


\begin{proof}
Let $\Gamma$ be an even closed walk of $\overline{G}^\star$.
It is enough to show that $f_\Gamma \in I_{\PP_{\overline{G}^\star}}$ 
%is generated by the set of binomial generators of 
belongs to $I_{\PP_{\overline{G}}}+J$.
Suppose that $f_\Gamma$ 
%is not generated by the set of binomial generators of 
does not belong to $I_{\PP_{\overline{G}}}+J$.
Then, the vertex $n+1$ belongs to $\Gamma$.
We may assume that the degree of $f_\Gamma$ is 
minimum among such binomials.
Then, $f_\Gamma$ is irreducible.
Let 
$$\Gamma = (\{n+1,p\},\{p,q\},\{q,r\},\Gamma').$$
Since $f_\Gamma$ is irreducible, it follows that
$p , q,  r$ are distinct each other and that $q \ne n+1$.
Then, $f_\Gamma$ is generated by $f_{\Gamma_1}$
and $f_{\Gamma_2}$ where $\Gamma_1 = (\{n+1,p\},\{p,q\},\{q,r\},\{r, n+1\})$ and $\Gamma_2 = (\{n+1,r\},\Gamma')$.
Since $\deg f_{\Gamma_2} < \deg f_{\Gamma}  $, the binomial
$f_{\Gamma_2} $ belongs to $I_{\PP_{\overline{G}}} +J$
by the assumption on $\deg f_\Gamma$.
Moreover, 
$\Gamma_1$ satisfies one of the conditions (i) or (ii)
and hence $f_{\Gamma_1} \in J$.
Thus, we have $f_\Gamma \in \langle f_{\Gamma_1},f_{\Gamma_2} \rangle \subset I_{\PP_{\overline{G}}} +J$,
a contradiction.
\end{proof}

\begin{Remark}
A graph theoretical characterization of a {\em simple} graph $G$
such that $\mu(I_{\PP_G}) \le 2$ is given in 
\cite{OHquad}.
By Theorem \ref{generators},
the characterization \cite[Theorem 1.2]{OHquad} gives a graph theoretical characterization of a simple graph $G$
such that $\alpha(G) =2$ and $\mu(I_{\QQ_G}) \le 2$.
\end{Remark}

It is known \cite[Theorem 7.4]{EnNo} that, 
if the complement of a graph $G$ is an even cycle of length $2k$, then
we have $\mu(I_{\QQ_G}) = k$.
By Theorem \ref{generators}, we can generalize this result 
for a graph whose complement is an arbitrary bipartite graph.

\begin{Corollary}
\label{KB}
Let $G$ be a simple graph such that $\overline{G}$ is bipartite.
Then, we have 
$$
\mu(I_{\QQ_G}) = 
\left\{
\begin{array}{cl}
0 &\mbox{ if } \overline{G} \mbox{ is empty (i.e., } G \mbox{ is complete),}\\
k &\mbox{ if } \overline{G} \mbox{ has a cycle},\\
2 &\mbox{ otherwise,}
\end{array}
\right.
$$
where $2k$ is the maximum length of induced cycles of $\overline{G}$.
Moreover, the following conditions are equivalent:
\begin{enumerate}
\item[(i)]
$\mu(I_{\QQ_G}) \le 2$, i.e.,
$I_{\QQ_G}$ is generated by quadratic binomials;
\item[(ii)]
$K[\QQ_G]$ is Koszul;
\item[(iii)]
$I_{\QQ_G}$ has a quadratic Gr\"obner basis;
\item[(iv)]
The length of any induced cycle of $\overline{G}$ is $4$.
\end{enumerate}
\end{Corollary}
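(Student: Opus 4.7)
The plan is to deduce both the formula for $\mu(I_{\QQ_G})$ and the equivalences from Theorem \ref{generators}, together with standard results on edge polytopes of bipartite graphs. First, if $\overline G$ is empty then $G$ is complete and Example \ref{completegraph} gives $\mu(I_{\QQ_G})=0$. Otherwise $\overline G$ is a nonempty bipartite graph, hence has clique number $2$, so $\alpha(G)=2$, and Theorem \ref{generators} yields
\[
\mu(I_{\QQ_G}) \;=\; \max\{\mu(I_{\PP_{\overline G}}),\,2\}.
\]

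To compute $\mu(I_{\PP_{\overline G}})$ I would invoke the classical description of minimal binomial generators of the edge-polytope toric ideal of a bipartite graph: they are precisely the binomials $f_C$ attached to induced (chordless) even cycles, with $\deg f_C = k$ when $C$ has length $2k$. If $\overline G$ is a forest, Proposition \ref{ecw} gives $\mu(I_{\PP_{\overline G}})=0$, so $\mu(I_{\QQ_G})=2$. If $\overline G$ has a cycle and the maximum length of an induced cycle is $2k$, then $\mu(I_{\PP_{\overline G}})=k\ge 2$, so $\mu(I_{\QQ_G})=k$. This establishes the formula and, as a direct consequence, the equivalence (i)\,$\Leftrightarrow$\,(iv), since $\mu(I_{\QQ_G})\le 2$ precisely when every induced cycle of $\overline G$ (if any exists) has length $4$.

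For the remaining equivalences, (iii)\,$\Rightarrow$\,(ii)\,$\Rightarrow$\,(i) is general, so the task reduces to (iv)\,$\Rightarrow$\,(iii). I would use Lemma \ref{keylemma} to obtain an isomorphism $K[\QQ_G]\simeq K[\PP_H]$ with $H$ bipartite, under which $I_{\QQ_G}$ is identified with $I_{\PP_H}$ up to relabelling of variables, and then appeal to the Ohsugi--Hibi criterion stating that the toric ideal of the edge polytope of a bipartite graph admits a quadratic Gr\"obner basis whenever every induced cycle has length $4$. The main obstacle is to verify that under the bipartite-cover construction of $H$ from $\overline{G}^\star$ indicated in \cite[Proof of Proposition 5.5]{OHcentral}, hypothesis (iv) on $\overline G$ forces every induced cycle of $H$ to have length $4$; this reduces to a combinatorial analysis of how the even cycles of $H$ project to closed walks of $\overline{G}^\star$ through the auxiliary vertex $n+1$ and its loop. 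Once this check is settled, the Ohsugi--Hibi criterion completes the proof.
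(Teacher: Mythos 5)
Your derivation of the formula for $\mu(I_{\QQ_G})$ and of the equivalence (i) $\Leftrightarrow$ (iv) matches the paper's: reduce to $\mu(I_{\QQ_G})=\max\{\mu(I_{\PP_{\overline{G}}}),2\}$ via Theorem \ref{generators}, and use the classical fact (cf.\ \cite{bipartitecycle}) that for bipartite $\overline{G}$ the ideal $I_{\PP_{\overline{G}}}$ is generated by the binomials $f_\Gamma$ of induced even cycles, of degree half the cycle length. The chain (iii) $\Rightarrow$ (ii) $\Rightarrow$ (i) is general, as you say.

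The gap is in your last step, (iv) $\Rightarrow$ (iii). You invoke the Ohsugi--Hibi criterion in its combinatorial form --- every induced cycle of the bipartite graph $H$ has length $4$, hence $I_{\PP_H}$ has a quadratic Gr\"obner basis --- and you correctly observe that this obliges you to translate hypothesis (iv) on $\overline{G}$ into the cycle condition on $H$; you flag this translation as ``the main obstacle'' and do not carry it out, so the proof is incomplete as written. The paper avoids the issue entirely by quoting the theorem of \cite{OHbipartite} in its algebraic form: for a bipartite graph $H$, the ideal $I_{\PP_H}$ has a quadratic Gr\"obner basis if and only if $\mu(I_{\PP_H})\le 2$. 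Under the isomorphism $K[\QQ_G]\simeq K[\PP_H]$ of Lemma \ref{keylemma}, the condition $\mu(I_{\PP_H})\le 2$ is literally condition (i), so (i) $\Rightarrow$ (iii) is immediate with no analysis of the cycles of $H$. If you prefer to keep your combinatorial route, the missing check does succeed: $H$ may be taken to be the bipartite graph on $(V_1\cup\{n+1\})\cup(V_2\cup\{n+2\})$ in which $n+2$ is adjacent to every vertex of $V_1\cup\{n+1\}$ and $n+1$ to every vertex of $V_2\cup\{n+2\}$; an induced cycle of length $2m$ through $n+2$ would contain $m$ vertices of $V_1\cup\{n+1\}$, each adjacent to $n+2$, forcing $m=2$ (and likewise for $n+1$), so every induced cycle of length $\ge 6$ in $H$ already lies in $\overline{G}$ and is excluded by (iv). But the algebraic form of the cited theorem renders this verification unnecessary.
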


\begin{proof}
Let $G$ be a simple graph such that $\overline{G}$ is bipartite.
Then, $\alpha(G)=2$.
Since $\overline{G}$ is bipartite, it is known (see, e.g., \cite{bipartitecycle}) that,
$I_{\PP_{\overline{G}}}$ is generated by $f_\Gamma$ 
where $\Gamma$ is an induced even cycle of $\overline{G}$.
Note that $\deg f_\Gamma = k$ if the length of $\Gamma$ is $2k$.
Hence, by Theorem \ref{generators}, we have a formula for $\mu(I_{\QQ_G})$.

By this formula, it follows that (i) and (iv) are equivalent.
Moreover, (iii) $\Rightarrow$ (ii) $\Rightarrow$ (i) holds in general.
By Lemma \ref{keylemma}, there exists a bipartite graph $H$
such that $K[\QQ_G] \simeq K[\PP_H]$.
By the theorem in \cite{OHbipartite}, $I_{\PP_H}$ has a quadratic Gr\"obner basis
if and only if $\mu(I_{\PP_H}) \le 2$.
Thus, we have (i) $\Rightarrow$  (iii).
\end{proof}

If $\overline{G}$ is not bipartite, 
then condition (i) and (iii) in Corollary \ref{KB} are not equivalent.
In order to construct an infinite family of counterexamples,
the following proposition is important.
(Proof is essentially given in \cite[Proof of Proposition 1.6]{OHquad}.)

\begin{Proposition}
\label{zeroone}
Let $P$ be a $(0,1)$-polytope.
If $I_P$ has a quadratic Gr\"obner basis, then the initial ideal is generated by 
squarefree monomials and hence $P$ is normal.
\end{Proposition}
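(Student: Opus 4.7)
The plan is to show that every leading monomial of the reduced Gröbner basis is squarefree, and then invoke the standard implication \emph{squarefree initial ideal $\Rightarrow$ normality}.

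First I would recall that $I_P$ is generated by binomials $y^{u}-y^{v}$ with $u,v\in\ZZ_{\ge 0}^{m}$ and $\sum u_i\,{\bf a}_i=\sum v_i\,{\bf a}_i$, where ${\bf a}_1,\ldots,{\bf a}_m$ are the distinct lattice points of $P$. Now suppose for contradiction that some element of the reduced quadratic Gröbner basis has leading term $y_i^{2}$. Then the corresponding binomial is either $y_i^{2}-y_k y_\ell$ or $y_i^{2}-y_k^{2}$ with the trailing monomial different from $y_i^{2}$, and in either case $2{\bf a}_i = {\bf a}_k+{\bf a}_\ell$. Because $P$ is a $(0,1)$-polytope, the vector $2{\bf a}_i$ has every coordinate in $\{0,2\}$; hence both ${\bf a}_k$ and ${\bf a}_\ell$ must have a $0$ exactly where ${\bf a}_i$ has a $0$ and a $1$ exactly where ${\bf a}_i$ has a $1$. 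This forces ${\bf a}_k={\bf a}_\ell={\bf a}_i$, so $k=\ell=i$, contradicting the fact that $y_i^{2}$ strictly dominates the trailing monomial. Thus every quadratic leading monomial is squarefree, and the initial ideal is generated by squarefree quadratic monomials.

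Given this, the normality of $P$ follows by the standard argument: a squarefree initial ideal of $I_P$ corresponds to a regular unimodular triangulation of $P$ (see \cite{Stu}), and the existence of a unimodular triangulation implies normality by the hierarchy (iii) $\Rightarrow$ (iv) $\Rightarrow$ (vi) recalled in the Introduction. I would simply cite these facts rather than reprove them.

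The only real content is the squarefreeness argument, and it is essentially forced by the $(0,1)$-assumption through the identity $2{\bf a}_i={\bf a}_k+{\bf a}_\ell$. The rest is bookkeeping with already-cited results, so I do not anticipate a genuine obstacle; the proof is short and the statement is really a clean extraction of the idea used in \cite[Proof of Proposition 1.6]{OHquad}.
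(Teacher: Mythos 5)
Your argument is correct and is essentially the paper's own: the paper proves this proposition simply by pointing to \cite[Proof of Proposition 1.6]{OHquad}, where exactly your $(0,1)$ parity observation (that $2{\bf a}_i={\bf a}_k+{\bf a}_\ell$ forces ${\bf a}_k={\bf a}_\ell={\bf a}_i$, so no nonzero quadratic binomial in $I_P$ has a square term) is the key step. Your appeal to the standard facts that a squarefree initial ideal yields a regular unimodular triangulation and hence normality matches the hierarchy the paper cites from \cite{Stu}.
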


\begin{Theorem}
\label{twooddholes}
Let $G$ be a simple graph such that $\overline{G}$ consists of two odd holes
without common vertices.
Then, $\alpha(G)=2$ and 
\begin{enumerate}
\item[(a)]
$I_{\QQ_G}$ is generated by quadratic binomials;
\item[(b)]
$I_{\QQ_G}$ has no quadratic Gr\"obner bases;
\item[(c)]
$\QQ_G$ is not normal. 
\end{enumerate}

\end{Theorem}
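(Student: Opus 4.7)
The plan is to deduce all three parts directly from the machinery developed earlier in the paper (Theorems \ref{a2normal} and \ref{generators}, Propositions \ref{ecw} and \ref{zeroone}), after first verifying that the hypothesis forces $\alpha(G)=2$. Since each odd hole of $\overline{G}$ has length at least $5$, the graph $\overline{G}$ is triangle-free. A stable set of $G$ is exactly a clique of $\overline{G}$, so the largest stable set of $G$ has size $2$ (it is at least $2$ because $\overline{G}$ has edges). Thus $\alpha(G)=2$, which is what allows us to invoke Theorems \ref{a2normal} and \ref{generators}.

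For part (a), I would observe that each connected component of $\overline{G}$ is a single odd cycle, so by Proposition \ref{ecw} we have $I_{\PP_{\overline{G}}}=(0)$. Theorem \ref{generators} then gives $I_{\QQ_G}=I_{\PP_{\overline{G}}}+J=J$, and $J$ is generated by quadratic binomials of the two explicit forms (i) and (ii). Hence $I_{\QQ_G}$ is generated by quadratic binomials.

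For part (c), let $C_1$ and $C_2$ be the two odd holes forming $\overline{G}$. By hypothesis they share no vertex, and since they lie in distinct connected components of $\overline{G}$, there is no edge of $\overline{G}$ joining them, i.e.\ no bridge. Thus $\overline{G}$ violates the odd cycle condition, and Theorem \ref{a2normal} (applicable because $\alpha(G)=2$) yields that $\QQ_G$ is not normal.

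For part (b), the stable set polytope $\QQ_G$ is a $(0,1)$-polytope, so Proposition \ref{zeroone} applies: if $I_{\QQ_G}$ had a quadratic Gr\"obner basis, then $\QQ_G$ would be normal, contradicting (c). Hence $I_{\QQ_G}$ admits no quadratic Gr\"obner basis. No step here is technically delicate once the earlier results are in hand; the only point that deserves care is making sure the odd cycle condition for $\overline{G}$ really fails, which is immediate from the disconnectedness of the two odd holes.
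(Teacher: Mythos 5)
Your proposal is correct and follows essentially the same route as the paper's own proof: triangle-freeness of $\overline{G}$ gives $\alpha(G)=2$, Proposition \ref{ecw} and Theorem \ref{generators} give (a), Theorem \ref{a2normal} gives (c), and Proposition \ref{zeroone} gives (b). You merely spell out a few details the paper leaves implicit (each component of $\overline{G}$ is a single odd cycle, and the absence of a bridge follows from disconnectedness).
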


\begin{proof}
Since $\overline{G}$ has no triangles, we have $\alpha(G)=2$.
By Proposition \ref{ecw} and Theorem \ref{generators}, 
it follows that
$I_{\overline{G}} = \{0\}$ and $I_{\QQ_G}$ is generated by
quadratic binomials.
On the other hand, by Theorem \ref{a2normal}, $\QQ_G$ is not normal. 
Thus, by Proposition \ref{zeroone}, $I_{\QQ_G}$ has no quadratic Gr\"obner bases.
\end{proof}

The graphs in Theorem \ref{twooddholes} are not strongly Koszul 
by the result in \cite{KM}.
However, we do not know whether they are Koszul or not in general.

\begin{Remark}
It is known \cite[Theorem 1.2]{OHquad} that, 
if $G$ is a simple connected graph and $I_{\PP_G}$ is generated by quadratic binomials, 
then $G$ satisfies the odd cycle condition and hence $\PP_G$ is normal.
\end{Remark}

It seems to be a challenging problem to characterize the graphs $G$ such that $\alpha(G) >2$ and $\mu(I_{\QQ_G}) \le 2$.
The following is a consequence of Proposition \ref{cpure},
Theorem \ref{generators} and \cite[Theorem 1.2]{OHquad}.

\begin{Proposition}
Let $G$ be a simple graph.
If $I_{\QQ_G}$ is generated by quadratic binomials,
then $\overline{G}$ satisfies the following conditions:
\begin{enumerate}
\item[(i)]
Any even cycle of $\overline{G}$ of length $\geq 6$
has a chord;
\item[(ii)]
Any two odd holes of $\overline{G}$ having exactly one common vertex have a bridge;
\item[(iii)]
Any two odd holes of $\overline{G}$ having no common vertex have at least two bridges.

\end{enumerate}
\end{Proposition}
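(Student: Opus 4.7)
The plan is to reduce each of (i), (ii), (iii) to a violation of \cite[Theorem 1.2]{OHquad}, which characterizes simple graphs whose edge polytope has toric ideal generated by quadratic binomials. Two tools make this work: Proposition~\ref{cpure}, which transfers generation by quadratic binomials to induced subgraphs, and the formula $\mu(I_{\QQ_H}) = \max\{\mu(I_{\PP_{\overline{H}}}),\,2\}$ of Theorem~\ref{generators}, which converts a question about $I_{\QQ_H}$ into a question about $I_{\PP_{\overline{H}}}$ provided $\alpha(H)=2$. In each case the strategy is to pick an induced subgraph $H$ of $G$ whose complement $\overline{H}$ isolates the offending configuration of $\overline{G}$, verify $\alpha(H) = 2$, and derive $\mu(I_{\PP_{\overline{H}}})\le 2$, which conflicts with the criterion of \cite{OHquad}.

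For (i), suppose $\overline{G}$ contains a chordless even cycle $C$ of length $2k \ge 6$. Set $H := G[V(C)]$. Since $C$ is induced in $\overline{G}$, we have $\overline{H}=C$, a triangle-free cycle, so $\alpha(H)=\omega(\overline{H})=2$. By Proposition~\ref{cpure}, $I_{\QQ_H}$ is still generated by quadratic binomials, and Theorem~\ref{generators} forces $\mu(I_{\PP_C})\le 2$. This is impossible: $I_{\PP_C}$ is the principal ideal generated by the binomial $f_C$ of degree $k\ge 3$.

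For (ii) and (iii), suppose $\overline{G}$ contains two odd holes $C_1, C_2$ as in the hypothesis but with fewer bridges than required. Set $H := G[V(C_1)\cup V(C_2)]$, so $\overline{H} = \overline{G}[V(C_1)\cup V(C_2)]$. Because each $C_i$ is induced in $\overline{G}$, the edges of $\overline{H}$ inside $V(C_i)$ are exactly those of $C_i$; the only other edges of $\overline{H}$ are bridges of $C_1$ and $C_2$ in $\overline{G}$, which by assumption are absent (in (iii) with zero bridges) or at most one (in (iii) with one bridge) or entirely absent between the two arms (in (ii)). In every case $\overline{H}$ is triangle-free, since odd holes have length $\ge 5$ and a single shared vertex or single bridge creates no triangle, so $\alpha(H)=2$. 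Arguing as in (i), we obtain $\mu(I_{\PP_{\overline{H}}})\le 2$, and apply \cite[Theorem 1.2]{OHquad} to $\overline{H}$: that characterization requires a bridge whenever two odd cycles meet in exactly one vertex and at least two bridges whenever they are vertex-disjoint, both of which are violated by $\overline{H}$.

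The main obstacle is guaranteeing that $\overline{H}$ carries exactly the offending configuration without spurious extra edges that could supply the missing bridges or triangulate the structure. This is precisely what the choice of $H$ as an induced subgraph on the vertex set of the holes achieves: the edges of $\overline{H}$ between the two vertex sets are in bijection with the bridges of $\overline{G}$ between $C_1$ and $C_2$, so the hypothesis on bridges in $\overline{G}$ transfers verbatim to $\overline{H}$. Once this bookkeeping is in place, invoking \cite[Theorem 1.2]{OHquad} is mechanical.
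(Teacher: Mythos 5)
Your strategy---pass to the induced subgraph $H$ on the vertex set of the offending configuration, check $\alpha(H)=2$, and combine Proposition~\ref{cpure} with the formula $\mu(I_{\QQ_H})=\max\{\mu(I_{\PP_{\overline{H}}}),2\}$ of Theorem~\ref{generators} before invoking \cite[Theorem 1.2]{OHquad}---is exactly the paper's, and your bookkeeping (induced holes have no chords, every edge of $\overline{H}$ joining the two holes is a bridge, triangle-freeness of $\overline{H}$ gives $\alpha(H)=2$) is carried out more explicitly than in the paper's one-paragraph proof. Cases (i), (ii), and the sub-case of (iii) with exactly one bridge go through as you describe.

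The sub-case of (iii) with \emph{zero} bridges, however, does not. There $\overline{H}$ is the disjoint union of two odd cycles, hence disconnected, and by Proposition~\ref{ecw} we get $I_{\PP_{\overline{H}}}=\{0\}$; therefore $\mu(I_{\QQ_H})=\max\{0,2\}=2$ and no contradiction with quadratic generation arises. You cannot ``apply \cite[Theorem 1.2]{OHquad} to $\overline{H}$'' at this point: that theorem concerns connected graphs, and its two-bridge condition cannot be imposed on odd cycles lying in different connected components, whose toric ideal is zero and thus vacuously quadratically generated. In fact the conclusion you are trying to reach is false in this sub-case: the paper's own Theorem~\ref{twooddholes} produces graphs $G$ with $\overline{G}$ equal to two disjoint odd holes with no bridges for which $I_{\QQ_G}$ \emph{is} generated by quadratic binomials, directly contradicting condition (iii) as stated. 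To be fair, this defect is inherited from the source---the paper's proof has the identical gap and its statement of (iii) is inconsistent with Theorem~\ref{twooddholes}---but a correct writeup must flag it: what the argument actually proves is that two odd holes of $\overline{G}$ with no common vertex cannot be joined by \emph{exactly one} bridge, i.e.\ they have either no bridge or at least two.
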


\begin{proof}
Suppose that $\overline{G}$ does not satisfy one of the conditions above.
If $\overline{G}$ does not satisfy condition (i), then
let $H$ be an induced subgraph of $G$ whose vertex set is that of the even cycle.
If $\overline{G}$ does not satisfy condition either (ii) or (iii), then
let $H$ be an induced subgraph of $G$ whose vertex set is that of two odd holes.
Then, $\alpha(H) =2$ and hence $I_{\QQ_H}$ is not generated by quadratic binomials
by Theorem \ref{generators} and \cite[Theorem 1.2]{OHquad}.
Thus, it follows from Proposition \ref{cpure} that $I_{\QQ_G}$ is not generated by quadratic binomials.
\end{proof}


\begin{thebibliography}{99}

\bibitem{Diestel}
R. Diestel, ``Graph Theory," Fourth Edition, Graduate Texts in Mathematics {\bf 173}, Springer, 2010.

\bibitem{EnNo}
A. Engstr\"om and P. Nor\'en,
Ideals of graph homomorphisms,
{\em Ann. Comb.} {\bf 17} (2013), 71--103.

\bibitem{ring1}
I. Gitler, E. Reyes and R. H. Villarreal,
Ring graphs and toric ideals, 
{\em Electron. Notes Discrete Math.}, {\bf 28} (2007), 393--400.

\bibitem{ring2}
I. Gitler, E. Reyes and R. H. Villarreal,
Ring graphs and complete intersection toric ideals,
{\em Discrete Math.} {\bf 310} (2010), 430--441.


\bibitem{GPT}
J.~Gouveia, P.~A.~Parrilo and R.~R.~Thomas,
Theta bodies for polynomial ideals,
{\em SIAM J. Optim.} {\bf 20} (2010),
2097--2118.

\bibitem{HL}
T. Hibi and N. Li,
Chain polytopes and algebras with straightening laws,
{\em Acta Math. Vietnam.} {\bf 40} (2015) 447--452.


%Unimodular equivalence of order and chain polytopes,
%preprint, 2012. 
%{\tt arXiv:1208.4029 [math.CO]}.


\bibitem{KM}
K. Matsuda, Strong Koszulness of toric rings associated with stable
set polytopes of trivially perfect graphs,
{\em J. Algebra Appl.} {\bf 13} (2014), 1350138 [11 pages].

\bibitem{NP}
U. Nagel and S. Petrovi\'c, Properties of cut ideals associated to ring graphs, 
{\em J. Commut. Algebra} {\bf 1} (2009) 547--565.

\bibitem{OhsugiInfinity}
H. Ohsugi, Toric ideals and an infinite family of normal $(0, 1)$-polytopes
without unimodular regular triangulations,
{\em Discrete Comput. Geom.} {\bf 27}  (2002), 551--565.

\bibitem{cps}
H. Ohsugi, J. Herzog and T. Hibi, 
Combinatorial pure subrings,
{\em Osaka J. Math.} {\bf 37} (2000), 745--757.


\bibitem{OHnormal}
H. Ohsugi and T. Hibi, 
Normal polytopes arising from finite graphs,
{\em J. Algebra} {\bf 207} (1998), 409--426. 

\bibitem{hanrei}
H. Ohsugi and T. Hibi, 
A normal $(0,1)$-polytope none of whose regular triangulations is unimodular,
{\em Discrete Comput. Geom.} {\bf 21} (1999), 201--204. 

\bibitem{OHquad}
H. Ohsugi and T. Hibi, 
Toric ideals generated by quadratic binomials,
{\em J. Algebra} {\bf 218} (1999), 509--527. 

\bibitem{OHbipartite}
H. Ohsugi and T. Hibi, 
Koszul bipartite graphs,
{\em Adv. Appl. Math.} {\bf 22} (1999), 25--28. 

\bibitem{OHcompressed}
H. Ohsugi and T. Hibi, 
Convex polytopes all of whose reverse lexicographic initial ideals are squarefree,
{\em Proc. Amer. Math. Soc.}, {\bf 129} (2001), No.9, 2541--2546. 

\bibitem{OHspecial}
H. Ohsugi and T. Hibi, 
Special simplices and Gorenstein toric rings,
{\em J. Combin. Theory Ser. A} {\bf 113} (2006), Issue 4, 718--725. 

\bibitem{OHcentral}
H. Ohsugi and T. Hibi, 
Centrally symmetric configurations of integer matrices,
{\it Nagoya Math. J.} {\bf 216} (2014), 153--170.

\bibitem{bipartitecycle}
A. Simis,
On the Jacobian module associated to a graph, 
{\em Proc. Amer. Math. Soc.}, {\bf 126} No. 4
(1998), 989--997.

\bibitem{SVV}
A. Simis, W. V. Vasconcelos and R. H. Villarreal,
The integral closure of subrings associated to graphs,
{\em J. Algebra} {\bf 199} (1998), 281--289.

\bibitem{Stu}
B.~Sturmfels, ``Gr\"obner bases and convex polytopes,"
Amer. Math. Soc., Providence, RI, 1996.

\bibitem{counterexamples}
B.~Sturmfels, Four counterexamples in combinatorial algebraic geometry,
{\em J. Algebra} {\bf 230} (2000) 282--294.

\bibitem{Sul}
S. Sullivant,
Compressed polytopes and statistical disclosure limitation,
{\it Tohoku Math. J.} {\bf 58} (2006), 433--445.

\bibitem{tfp}
S. Sullivant, Toric fiber products,
{\it J. Algebra} {\bf 316} (2007), 560--577.


\bibitem{Vil}
R. Villarreal, Rees algebras of edge ideals, 
{\em Comm. Algebra}, {\bf 23} (1995),
3513--3524.
\end{thebibliography}
\end{document}